\newtheorem{theorem}{Theorem}[section]
\newtheorem{lemma}[theorem]{Lemma}
\newtheorem{proposition}[theorem]{Proposition}
\theoremstyle{definition}
\newtheorem{definition}[theorem]{Definition}
\newtheorem{example}[theorem]{Example}
\theoremstyle{remark}
\newtheorem{remark}[theorem]{Remark}
\numberwithin{equation}{section}
\newcommand*{\rom}[1]{\expandafter\@slowromancap\romannumeral #1@}
\begin{document}

\title[Power-norms based on Hilbert $C^*$-modules]{Power-norms based on Hilbert $C^*$-modules}

\author[S. Abedi, M.S. Moslehian]{Sajjad Abedi$^1$ \MakeLowercase{and} Mohammad Sal Moslehian$^2$}

\address{$^1$Department of Pure Mathematics, Ferdowsi University of Mashhad, P. O. Box 1159, Mashhad 91775, Iran}
\email{sajjadabedy1995@gmail.com}

\address{$^2$Department of Pure Mathematics, Center of Excellence in Analysis on Algebraic Structures (CEAAS), Ferdowsi University of Mashhad, P. O. Box 1159, Mashhad 91775, Iran.}
\email{moslehian@um.ac.ir; moslehian@yahoo.com}

\renewcommand{\subjclassname}{\textup{2020} Mathematics Subject Classification}
\subjclass[]{46L08, 46L05, 46B15, 47L10.}

\keywords{Hilbert $C^*$-module; power-normed space; $C^*$-power-norm.} 

\begin{abstract}
Suppose that $\mathscr{E}$ and $\mathscr{F}$ are Hilbert $C^*$-modules.  We present a power-norm $\left(\left\|\cdot\right\|^{\mathscr{E}}_n:n\in\mathbb{N}\right)$ based on $\mathscr{E}$ and obtain some of its fundamental properties. We introduce a new definition of the absolutely $(2,2)$-summing operators from $\mathscr{E}$ to $\mathscr{F}$,  and denote the set of such operators by  $\tilde{\Pi}_2(\mathscr{E},\mathscr{F})$ with the convention $\tilde{\Pi}_2(\mathscr{E})=\tilde{\Pi}_2(\mathscr{E},\mathscr{E})$. It is known that the class of all Hilbert--Schmidt operators on a Hilbert space $\mathscr{H}$ is the same as the space $\tilde{\Pi}_2(\mathscr{H})$. We show that the class of Hilbert--Schmidt operators introduced by Frank and Larson coincides with the space $\tilde{\Pi}_2(\mathscr{E})$ for a  countably generated Hilbert $C^*$-module $\mathscr{E}$ over a unital commutative $C^*$-algebra.  These results motivate us to investigate the properties of the space $\tilde{\Pi}_2(\mathscr{E},\mathscr{F})$.
\end{abstract}
\maketitle
\section{Introduction and preliminaries}
 
In 2012, Dales and Polyakov \cite{5} introduced the notions of multi-norm and dual multi-norm based on a normed space. Although there are some analogies between this theory and the theory of operator spaces, there are serious differences between them. For example, the theory of multi-normed spaces deals with the general Banach spaces $L^p$ for $1\leq p\leq \infty$, while the other theory is initiated for $L^2$-spaces. After the seminal work \cite{5}, several mathematicians have explored and generalized these spaces. For example, the second author and Dales \cite{MOS} investigated the perturbation of mappings between multi-normed spaces; and Blasco \cite{bbb} worked on several aspects of power-normed spaces; see \cite{WIC} for some open problems in this content.

We recall some definitions and notation from \cite{lance, MT, mor}.

Throughout this paper, let $\mathcal{E}$ be a normed space. For $k\in\mathbb{N}$, we denote the linear space $\mathcal{E}\oplus \dots \oplus \mathcal{E}$ consisting of $k$-tuples $(x_1,\ldots,x_k)$ by $\mathcal{E}^k$, where $x_1,\ldots, x_k \in \mathcal{E}$. As usual, the linear structure on $\mathcal{E}^k$ is defined coordinatewise. Let ${\mathcal{E}}_{[1]}$  stand for the closed unit ball of $\mathcal{E}$. We assume that $\mathfrak{A}$ is a \emph{$C^*$-algebra} equipped with its usual partial order $\leq$ on the self-adjoint part of $\mathfrak{A}$. We set $\mathfrak{A}_+=\left\lbrace a\in\mathfrak{A}:a\geq0\right\rbrace$. . A subalgebra $\mathfrak{I}$ of a $C^*$-algebra $\mathfrak{A}$ is said to be \emph{(left) right ideal} if ($ba\in\mathfrak{I}$) $ab\in\mathfrak{I}$ for each $a\in\mathfrak{A}$ and $b\in\mathfrak{I}$. In the case where $\mathfrak{I}$ is a left and right ideal of $\mathfrak{A}$, we call it a \emph{two-sided ideal} of $\mathfrak{A}$.

We say that a  state $\tau$ is \emph{pure} if for any positive linear functional $\rho:\mathfrak{A}\rightarrow\mathbb{C}$ with $\rho\leq\tau$, there exists $0\leq t\leq1$ such that $\rho=t\tau$. The set of pure states on $\mathfrak{A}$ is denoted by  $\mathcal{PS}\left(\mathfrak{A}\right) $. In the case where $\mathfrak{A}$ is commutative, we observe that $\mathcal{PS}\left(\mathfrak{A}\right)=\Omega\left(\mathfrak{A}\right)$, where $\Omega\left( \mathfrak{A}\right)$ is  the character space of  $\mathfrak{A}$ equipped with the weak$\rm{^*}$ topology. 

By a (right) \emph{Hilbert $C^*$-module} over $\mathfrak{A}$, or shortly a Hilbert $\mathfrak{A}$-module, we mean a right $\mathfrak{A}$-module $\mathscr{E}$ equipped with an  $\mathfrak{A}$-valued inner product  $\langle \cdot, \cdot \rangle$
satisfying (i) $\langle x,x\rangle \geq 0$ with equality if and only if $x=0$; (ii) it is linear at the second variable; (iii) $\langle x,ya\rangle =\langle x,y\rangle a$; (iv) $\langle y,x\rangle=\langle x,y\rangle^{*}$ for all $x, y \in \mathscr{E}$, $a\in \mathfrak{A}$, and $\lambda\in \mathbb{C}$, and it is a Banach space endowed with the norm 
$\left\| x \right\|=\left\| \left\langle x, x \right\rangle \right\|^{1/2}$. Moreover set $\left| x \right|=  \left\langle x, x \right\rangle^{1/2}$. A $C^*$-algebra $\mathfrak{A}$ is a Hilbert $\mathfrak{A}$-module if we define $\left\langle a,b\right\rangle=a^*b$ for all $a,b\in\mathfrak{A}$.  The unit of $\mathfrak{A}$ is denoted by $1_{\mathfrak{A}}$. Inspired by \cite[Proposition 1.1]{lance}, there exists a useful version of the Cauchy--Schwarz inequality  that asserts $\left|\left\langle x, y \right\rangle \right|^2\leq\left\| x \right\|^2\left|y \right|^2$. Furthermore, $\left| xa \right|\leq\left\|x \right\|\left|a\right| $ for every $x,y \in \mathscr{E}$ and $a\in\mathfrak{A}$. However, it need not be the case that $\left|x+y\right|\leq\left|x \right|+\left|y\right|$ in general. From now on, we assume that $\mathscr{E}$, $\mathscr{F}$, and $\mathscr{G}$ are Hilbert $\mathfrak{A}$-modules. 

A map $T : \mathscr{E}\rightarrow \mathscr{F}$ is said to be \emph{adjointable} if there exists a map $T^* : \mathscr{F}\rightarrow \mathscr{E}$ such that $\left\langle Tx,y \right\rangle=\left\langle x,T^*y \right\rangle$ for all $x \in \mathscr{E}$ and $y \in \mathscr{F}$. The set of all adjointable maps from $\mathscr{E}$ into $\mathscr{F}$ is denoted by $\mathcal{L}(\mathscr{E},\mathscr{F})$, and we write $\mathcal{L}(\mathscr{E})$
for $\mathcal{L}(\mathscr{E},\mathscr{E})$. It is known from  \cite[p. 8]{lance} that $\mathcal{L}(\mathscr{E})$ is a $C^*$-algebra. Assume that $\tau$ is a positive linear functional on $\mathfrak{A}$. Set $\mathcal{N}_{\tau}=\left\lbrace x\in\mathscr{E}:\tau\left( \left\langle x,x\right\rangle\right) =0 \right\rbrace $. There is a well-defined inner product on  ${\mathscr{E}}/{\mathcal{N}_{\tau}}$ given by $\left\langle x+\mathcal{N}_{\tau},y+\mathcal{N}_{\tau}\right\rangle=\left\langle x,y\right\rangle$ for $x,y\in\mathscr{E}$. The Hilbert  completion of ${\mathscr{E}}/{\mathcal{N}_{\tau}}$ is denoted by $\mathscr{H}_{\tau}$. Let $T\in\mathcal{L}(\mathscr{E})$. Let us define the operator $\phi\left(T\right)$ on  $\mathscr{E}/\mathcal{N}_{\tau}$ by setting $\phi(T)\left(x+\mathcal{N}_{\tau}\right)=Tx+\mathcal{N}_{\tau}$ for $x\in\mathscr{E}$. It follows from \cite[Proposition 1.2]{lance} that $\left\|\phi(T)\left(x+\mathcal{N}_{\tau}\right)\right\|^2=\tau\left( \left\langle Tx,Tx\right\rangle\right)\leq\left\|T \right\|^2\tau\left( \left\langle x,x\right\rangle\right)=\left\|T\right\|^2\left\|x+\mathcal{N}_{\tau}\right\|^2$. Thus, the operator $\phi(T)$ has a unique extention  $\phi_{\tau}(T)\in\mathcal{L}(\mathscr{H}_{\tau})$. It is easily seen that the map $\phi_{\tau}:\mathcal{L}(\mathscr{E})\rightarrow\mathcal{L}(\mathscr{H}_{\tau})$ is a $*$-homomorphism. 

For given $x \in \mathscr{E}$ and $y \in \mathscr{F}$, define ${\theta}_{y,x}:\mathscr{E} \rightarrow \mathscr{F}$ by ${\theta}_{y,x}(z)\mapsto y\left\langle x,z\right\rangle \quad (z \in \mathscr{E})$. Evidently ${\theta}_{y,x} \in \mathcal{L}(\mathscr{E},\mathscr{F})$ with $\left( {\theta}_{y,x}\right)^*={\theta}_{x,y}$ and $\left\|{\theta}_{y,x}\right\|\leq\left\|x \right\| \left\|y \right\| $. We denote  the linear subspace of $\mathcal{L}(\mathscr{E},\mathscr{F})$ spanned by $\left\lbrace {\theta}_{y,x}: x \in \mathscr{E}, y \in \mathscr{F} \right\rbrace $ by $\mathcal{F}(\mathscr{E},\mathscr{F})$, and its norm-closure is denoted by $\mathcal{K}(\mathscr{E},\mathscr{F})$. We write $\mathcal{K}(\mathscr{E})$ for $\mathcal{K}(\mathscr{E},\mathscr{E})$. An operator $U\in\mathcal{L}(\mathscr{E},\mathscr{F})$ is said to be unitary if
$U^*U=1_{\mathcal{L}(\mathscr{E})}$ and $UU^*=1_{\mathcal{L}(\mathscr{F})}$. The set of all unitaries from $\mathscr{E}$ to $\mathscr{F}$ is denoted by $\mathcal{U}(\mathscr{E},\mathscr{F})$. We shall abbreviate $\mathcal{U}(\mathscr{E},\mathscr{E})$ by $\mathcal{U}(\mathscr{E})$.

 A closed submodule $\mathscr{F}$ of $\mathscr{E}$ is said to be \emph{orthogonally complemented} if $\mathscr{E}=\mathscr{F}\oplus\mathscr{F}^{\perp}$, where $\mathscr{F}^{\perp}=\left\lbrace y\in\mathscr{E}:\left\langle x,y\right\rangle=0, x\in\mathscr{F}\right\rbrace$. Let $\mathscr{F}$ and $\mathscr{G}$ be closed submodules of $\mathscr{E}$. If $\mathscr{G}\subseteq\mathscr{F}^{\perp}$, then we denote it by $\mathscr{F}\perp\mathscr{G}$. For an operator $T\in\mathcal{L}(\mathscr{E},\mathscr{F})$, we have  $\rm{ker}(T)\perp\overline{\rm{ran}(T^*)}$. An operator $W\in\mathcal{L}(\mathscr{E},\mathscr{F})$ is a partial isometry if $W^*W$ is a projection. 

Let $\mathscr{H}$ and $\mathscr{K}$ be Hilbert spaces. Assume that $T\in\mathcal{L}\left(\mathscr{H},\mathscr{K}\right)$ . For $1<p$, define the \emph{Schatten $p$-norm} of $T$ by $\left\| T\right\|_{\left( p\right) }=\left(\mathrm {Tr}~ \left| T\right| ^{\left(p\right) }\right)^{\frac{1}{p}}$.  The  space of \emph{p-th Schatten class operators} is denoted by $\mathcal{L}^p\left(\mathscr{H},\mathscr{K}\right)=\left\lbrace T\in\mathcal{L}\left(\mathscr{H},\mathscr{K}\right): \left\| T\right\|_{\left( p\right)}<\infty \right\rbrace$.  Recall that the second Schatten class is nothing than the \emph{Hilbert--Schmidt operators}. Moreover, the first Schatten class is the space of \emph{trace class operators} \cite[pp. 59--65]{mor}. We write $\mathcal{L}^p(\mathscr{H})$ for $\mathcal{L}^p(\mathscr{H},\mathscr{H})$. Now, we recall some definitions and notations from \cite{5, 61}.

Let $\left(\mathcal{E},\left\|\cdot\right\|\right)$ be a normed space over $\mathbb{C}$. A \emph{power-norm} based on $\mathcal{E}$ is a sequence $(\left\|\cdot\right\|_n : n\in \mathbb{N})$ such that $\left\|\cdot\right\|_n$ is a norm on	${\mathcal{E}}^n$ for each $n \in \mathbb{N}$,	$\left\|x_0\right\|_1=\left\|x_0\right\|$, where $x_0 \in \mathcal{E}$, 	and the following Axioms 	(A1)--(A3) hold for each $n \in \mathbb{N}$ and $x=(x_1,\ldots,x_n) \in {\mathcal{E}}^n$:
\begin{enumerate}
\item[(A1)]
for each $\sigma \in \mathfrak{S}_n$, where $\mathfrak{S}_n$ is the set of all permutation of 	$\left\lbrace 1,\ldots,n\right\rbrace $, we have
\begin{equation}\nonumber
\left\| (x_{\sigma(1)},\ldots,x_{\sigma(n)})\right\|_n = \left\| x \right\|_n ;
\end{equation}
\item[(A2)]
for each $\alpha_1,\ldots,\alpha_n \in \mathbb{C}$, 
\begin{equation}\nonumber
\left\|(\alpha_1x_1,\ldots,\alpha_nx_n) \right\|_n\leq (\max_{1\leq i\leq n } \left| \alpha_i \right| )\left\| x \right\|_n;	
\end{equation}
\item[(A3)]
f	or each $x_1,\ldots,x_{n-1} \in \mathcal{E}$,
\begin{equation}\nonumber
\left\|(x_1,\ldots,x_{n-1},0) \right\|_n=\left\|(x_1,\ldots,x_{n-1}) \right\|_{n-1}.
\end{equation}
\end{enumerate}
In this case, $\left( \left\|\cdot\right\|_n: n\in\mathbb{N} \right)$ is named as a \emph{power-norm} based on $\mathcal{E}$, and\\ $\left( \left( {\mathcal{E}}^n,\left\|\cdot\right\|_n\right):n\in\mathbb{N} \right) $ is a \emph{power-normed space}. A power-norm $\left( \left\|\cdot\right\|_n: n\in\mathbb{N} \right)$ based on a Hilbert $\mathfrak{A}$-module $\mathscr{E}$ is called a \emph{$C^*$-power-norm} based on $\mathscr{E}$ if, in addition to	(A1)--(A3), we have
\begin{enumerate}
\item[(B2)]
for each 	$a_1,\ldots,a_n \in \mathfrak{A}$, 
\begin{equation*}
\left\|(x_1a_1,\ldots,x_na_n) \right\|_n\leq (\max_{1\leq i\leq n}\left\|a_i\right\| )\left\| x \right\|_n.	
\end{equation*}
\end{enumerate}
A power-norm is a \emph{multi-norm} based on $\mathcal{E}$, and $\left( \left( {\mathcal{E}}^n,\left\|\cdot\right\|_n\right):n\in\mathbb{N} \right) $ is a \emph{multi-normed space} if, in addition to	(A1)--(A3), we have
\begin{enumerate}
	\item[(A4)]
	for each 
	$x_1,\ldots,x_{n-1} \in \mathcal{E}$,
	\begin{equation} \nonumber
	\left\|\left( x_1,\ldots,x_{n-1},x_{n-1}\right)\right\|_n\leq\left\|\left( x_1,\ldots,x_{n-2},x_{n-1}\right) \right\|_{n-1}.
	\end{equation}
\end{enumerate}
 For a power-normed space $\left(\left({\mathcal{E}}^n,\left\|\cdot\right\|_n\right):n\in\mathbb{N} \right)$, it holds that
\begin{equation}\nonumber
\max_{1\leq i\leq n}\left\| x_i \right\| \leq \left\|(x_1,\ldots,x_{n}) \right\|_{n}\leq \sum_{i=1}^{n}\left\| x_i \right\|.
\end{equation}
 Dales and Polyakov  \cite[Theorem 4.15]{5} presented the concept of \emph{Hilbert multi-norm} based on a Hilbert space $\mathscr{H}$ by 
\begin{equation}\label{111}\nonumber
\left\|\left(x_1,\ldots,x_n\right) \right\|^{\mathscr{H}}_n= \sup\left\|p_1x_1 +\dots+p_nx_n\right\|=
\sup\left(\left\|p_1x_1 \right\|^2+\dots+\left\|p_nx_n\right\|^2 \right)^{1/2}
\end{equation}
for each $x_1,\ldots,x_n\in\mathscr{E}$, where the supremum is taken over all families $(p_i, 1 \leq i \leq n)$ of mutually orthogonal projections summing to $1_{\mathcal{L}(\mathscr{H})}$.  In the second section, we extend the notion of Hilbert multi-norm to that of \emph{Hilbert $C^*$-multi-norm} based on $\mathscr{E}$.  In the third section, we start with the definition of \emph{$2$-summing norm} based on a normed space $\mathcal{E}$ such that
\begin{equation}\label{37}
\mu_{2,n}(x_1,\ldots,x_n)=\sup\left\lbrace \left( \sum_{i=1}^{n}\left| \lambda(x_i)\right|^2\right)^{1/2}:\lambda\in\mathcal{E}^{\prime}\right\rbrace,
\end{equation}
where $\mathcal{E}^{\prime}$ is the dual space of $\mathcal{E}$; see \cite[Definition 3.15]{5}. We introduce the following power-norms based on $\mathscr{E}$ such that
\begin{align*}
&\mu_{n}(x_1,\ldots,x_n) = \sup\left\lbrace \left\|\left(\sum_{i=1}^n \left|Tx_i\right|^{\,2} \right)^{1/2}\right\| : T\in\mathcal{L}\left(\mathscr{E},\mathfrak{A}\right)_{[1]}\right\rbrace,\\&\mu^*_{n}(x_1,\ldots,x_n) = \sup\left\lbrace \left\|\left(\sum_{i=1}^n \left|(Tx_i)^*\right|^{\,2} \right)^{1/2}\right\| : T\in\mathcal{L}\left(\mathscr{E},\mathfrak{A}\right)_{[1]}\right\rbrace.
\end{align*}
We transfer some properties of $2$-summing norm stated in \cite[p. 26]{mu} and \cite[p. 66]{17}, to the new versions. In the last section, we  introduce a new version of the absolutely  \emph{$(2,2)$-summing operators} from $\mathscr{E}$ to $\mathscr{F}$  stated in \cite[p. 68]{5}. We denote the set of these operators by $\tilde{\Pi}_2(\mathscr{E},\mathscr{F})$.  The space $\tilde{\Pi}_2(\mathscr{E},\mathscr{E})$ is abbreviated as $\tilde{\Pi}_2(\mathscr{E})$.  It is shown \cite[Proposition 10.1]{71} that $\tilde{\Pi}_2(\mathscr{H})=\mathcal{L}^2(\mathscr{H})$ and $\tilde{\pi}_2(u)=\left\|u\right\|_{(2)}$ for each $u\in\mathcal{L}^2(\mathscr{H})$. For a  countably generated Hilbert $C^*$-module $\mathscr{E}$ over a  unital commutative $C^*$-algebra $\mathfrak{A}$, Frank and Larson introduced the class of Hilbert--Schmidt operators in $\mathcal{L}(\mathscr{E})$ \cite[p. 21]{comm}, and  Kaad \cite[Remark 2.7]{kad}  omited the unital condition of $\mathfrak{A}$.

 In this paper, we show that $\tilde{\Pi}_2(\mathscr{E})$  coincides with the class of Hilbert--Schmidt operators in $\mathcal{L}(\mathscr{E})$. In 2021, Stern and van Suijlekom \cite{sche} introduced  Schatten classes for Hilbert $C^*$-modules over commutative $C^*$-algebras. The  classes form two-sided ideals of compact operators and are equipped with a Banach norm and a $C^*$-valued trace with interesting properties. These results encourage us to explore the properties of the space $\tilde{\Pi}_2(\mathscr{E},\mathscr{F})$. In what follows, for $T\in\mathcal{L}(\mathscr{E})$, we set $\tilde{\pi}_1(T):=\tilde{\pi}_2(\left|T\right|^{1/2})^2$,  and explore some  fundamental propertie	s of $\tilde{\Pi}_1(\mathscr{E}):=\left\lbrace T\in\mathcal{L}(\mathscr{E}):\tilde{\pi}_1(T)<\infty \right\rbrace$. Furthermore, we show that $\tilde{\pi}_p(T)\leq\sup_{\tau\in\mathcal{PS}\left( \mathfrak{A}\right)}\left\|\phi_{\tau}(T)\right\| _{\left(p\right)}$ for $T\in\mathcal{L}(\mathscr{E})$ and $p\in\left\lbrace 1,2\right\rbrace $. Finally, by making use of \cite[Theorem 15.3.7]{wegga} and \cite[Lemma 3.12]{pol}, we consider some conditions to obtain more properties of the spaces $\tilde{\Pi}_2(\mathscr{E},\mathscr{F})$ and $\tilde{\Pi}_1(\mathscr{E})$.

\section{Multi-norm based on a Hilbert $C^*$-module}
Let $\mathscr{E}$ be a Hilbert $\mathfrak{A}$-module. Set 
\begin{equation}\label{77}
\left\|\left(x_1,\ldots,x_n\right) \right\|^{\mathscr{E}}_n=\sup\left\|P_1x_1 +\dots+P_nx_n\right\|=\sup\left\| \left|P_1x_1 \right|^2+\dots+\left|P_nx_n \right|^2\right\|^{1/2},
\end{equation}
for each $x_1,\ldots,x_n\in\mathscr{E}$, where  the supremum is taken over all families $(P_i:1\leq i\leq n)$ of mutually orthogonal projections in $\mathcal{L}(\mathscr{E})$ summing to $1_{\mathcal{L}(\mathscr{E})}$ by allowing the possibility that $P_j=0$ for some $1 \leq j \leq n $. It  immediately follows that $\left(\left\|\cdot\right\|^{\mathscr{E}}_n:n\in\mathbb{N}\right)$ is a multi-norm based on $\mathscr{E}$. We call it the \emph{Hilbert $C^*$-multi-norm} based on $\mathscr{E}$.

\begin{example}\label{QQ}
Let $\mathfrak{A}$ be a unital $C^*$-algebra. Since $\mathcal{L}(\mathfrak{A})=\mathfrak{A}$ \cite[p. 10]{lance}, the power-norm  $\left(\left\|\cdot\right\|^{\mathscr{E}}_n:n\in\mathbb{N}\right)$	based on	$\mathscr{E}=\mathfrak{A}$	can be written as 
\begin{equation}\label{56}
\left\|(a_1,\ldots,a_n) \right\|^{\mathscr{E}}_n = \sup\left\|p_1a_1 +\dots+p_na_n\right\| 
\end{equation}
for $a_1,\ldots,a_n \in \mathfrak{A}$, where the supremum is taken over all families	$\left(p_i:1\leq i\leq n\right) $  of mutually orthogonal projections in	$\mathfrak{A}$	 summing to $1_{\mathfrak{A}}$ by allowing the possibility that 	$p_j=0$ for some	$1\leq j\leq n$. In the case when $\mathfrak{A}$ is unital and commutative,  it follows from \cite[Theorem 2.1.10]{mor} that $\mathfrak{A}=C(\Omega)$, where $\Omega$ is a compact Hausdorff space. Thus, at most one of the projections is $1_{\mathfrak{A}}$ and the others are zero. Therefore, $\left\|(a_1,\ldots,a_n) \right\|^{\mathscr{E}}_n=\max_{1\leq i\leq n}\left\|a_i\right\|$. 
\end{example}
We recall some definitions and notation from \cite[pp. 67--75]{32}.

Take $\mathcal{T} \in\mathbb{B}\left(\mathcal{E},\mathcal{F}\right) $, where $\mathbb{B}\left(\mathcal{E},\mathcal{F}\right)$ is the space of all bounded linear maps from $\mathcal{E}$ to $\mathcal{F}$. The $ n${th} amplification of  $\mathcal{T}$ from a power normed space $\left( \left({\mathcal{E}}^n,\left\|\cdot\right\|_n\right):n\in\mathbb{N}\right)  $ to a power normed space $\left( \left({\mathcal{F}}^n,\left\|\cdot\right\|_n\right):n\in\mathbb{N}\right)$ is defined by 
\begin{align*}
{\mathcal{T}}^{(n)} \left(x_1,\ldots,x_n \right) =\left(\mathcal{T}x_1,\ldots,\mathcal{T}x_n \right)\quad (n\in\mathbb{N}).
\end{align*}
 The $ n${th} amplification of $\mathcal{T}$ is bounded as a linear map from $ \left({\mathcal{E}}^n,\left\|\cdot\right\|_n\right)$ to $ \left({\mathcal{F}}^n,\left\|\cdot\right\|_n\right)$ such that
 \begin{equation}\label{ampl}
  \left\|\mathcal{T}\right\|\leq\left\|{\mathcal{T}}^{(n)}\right\|\leq n\left\|\mathcal{T}\right\|\quad(n\in\mathbb{N}). 
\end{equation}
Moreover, the sequence $\left(\left\|{\mathcal{T}}^{(n)}\right\|:n\in\mathbb{N} \right)$ is increasing.

\begin{definition}
	Let 	$\left( \left({\mathcal{E}}^n,\left\|\cdot\right\|_n\right):n\in\mathbb{N}\right)$	and	$\left( \left({\mathcal{F}}^n,\left\|\cdot\right\|_n\right):n\in\mathbb{N}\right)$ be power-normed spaces. Suppose	that	$\mathcal{T}\in \mathbb{B}\left(\mathcal{E},\mathcal{F}\right)$. 	Then	$\mathcal{T}$	is \emph{multi-bounded} with norm	$\left\|\mathcal{T}\right\|_{mb}$, if	
	\begin{equation}\label{am}
	\left\|\mathcal{T}\right\|_{mb} := \lim_{n\rightarrow \infty} \left\|{\mathcal{T}}^{(n)}\right\|< \infty.
	\end{equation}
\end{definition}

The collection of multi-bounded maps from $\mathcal{E}$ to $\mathcal{F}$ is denoted by $\mathcal{M}\left(\mathcal{E},\mathcal{F}\right) $.
If $\left( \left({\mathcal{E}}^n,\left\|\cdot\right\|_n\right):n\in\mathbb{N}\right)$ and $\left( \left({\mathcal{F}}^n,\left\|\cdot\right\|_n\right):n\in\mathbb{N}\right)$ are power-normed spaces, then \linebreak
$\left(\mathcal{M}\left(\mathcal{E},\mathcal{F}\right),\left\|\cdot\right\|_{mb}\right)$ is a normed space. Furthermore, if $\mathcal{F}$ is a Banach space, then $\left(\mathcal{M}\left(\mathcal{E},\mathcal{F}\right),\left\|\cdot\right\|_{mb}\right)$ is a Banach space \cite[Theorem 6.20]{5}.

Let   $\mathscr{E}$ and $\mathscr{F}$ be Hilbert $\mathfrak{A}$-modules. Suppose that 	$\left( \left({\mathscr{E}}^n,\left\|\cdot\right\|_n\right):n\in\mathbb{N}\right)$	and\\	$\left( \left({\mathscr{F}}^n,\left\|\cdot\right\|_n\right):n\in\mathbb{N}\right)$ are power-normed spaces. Then the collection of  multi-bounded adjointable operators from $\mathscr{E}$ to $\mathscr{F}$ is denoted by  $\mathcal{ML}(\mathscr{E},\mathscr{F})$. Since $\left\|T\right\|\leq\left\|T\right\|_{mb}$ for each $T\in\mathcal{L}(\mathscr{E},\mathscr{F})$ and $\mathcal{ML}(\mathscr{E},\mathscr{F})=\mathcal{L}(\mathscr{E},\mathscr{F})\cap\mathcal{M}\left(\mathscr{E},\mathscr{F}\right)$, it follows that $\mathcal{ML}(\mathscr{E},\mathscr{F})$ is a Banach space. The space $\mathcal{ML}(\mathscr{E},\mathscr{E})$ is abbreviated as $\mathcal{ML}(\mathscr{E})$.

We conclude this section with the following proposition, which was proved in the setting of Hilbert spaces in \cite[p. 127]{17}. We shall prove it for Hilbert $C^*$-modules. To this end, we employ the Russo--Dye theorem  asserting that if $\mathfrak{A}$ is a $C^*$-algebra, then $\mathfrak{A}_{[1]}$ is equal to $\overline{co}(\mathcal{U}(\mathfrak{A}))$, the norm-closed convex hull of $\mathcal{U}(\mathfrak{A})$. 

\begin{proposition}\label{xx}
	Let	$\left(\left\|\cdot\right\|^{\mathscr{E}}_n:n\in\mathbb{N}\right)$	be the Hilbert $C^*$-multi-norm based on $\mathscr{E}$. Then 
	\begin{equation}\label{90}
\mathcal{ML}(\mathscr{E})=\mathcal{L}(\mathscr{E}) \quad\text{with}\quad\left\|T\right\|_{mb} = \left\|T\right\| \quad\text{for each}\quad T\in \mathcal{L}(\mathscr{E}).
	\end{equation}
\end{proposition}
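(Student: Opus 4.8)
The plan is to show that $\left\|T^{(n)}\right\|=\left\|T\right\|$ for every $n\in\mathbb{N}$ and every $T\in\mathcal{L}(\mathscr{E})$. Once this is done, \eqref{am} gives $\left\|T\right\|_{mb}=\lim_{n\to\infty}\left\|T^{(n)}\right\|=\left\|T\right\|<\infty$, so $T$ is multi-bounded; since $\mathcal{ML}(\mathscr{E})=\mathcal{L}(\mathscr{E})\cap\mathcal{M}(\mathscr{E},\mathscr{E})$, this yields $\mathcal{ML}(\mathscr{E})=\mathcal{L}(\mathscr{E})$ together with $\left\|T\right\|_{mb}=\left\|T\right\|$, which is \eqref{90}. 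The inequality $\left\|T\right\|\le\left\|T^{(n)}\right\|$ is already recorded in \eqref{ampl}, so the whole task reduces to proving $\left\|T^{(n)}\right\|\le\left\|T\right\|$.

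I would first treat a unitary $U\in\mathcal{U}(\mathscr{E})$. Fix $x_1,\ldots,x_n\in\mathscr{E}$ and a family $(P_i:1\le i\le n)$ of mutually orthogonal projections in $\mathcal{L}(\mathscr{E})$ summing to $I_{\mathscr{E}}$. Put $Q_i:=U^*P_iU$. A short computation shows that each $Q_i$ is a projection, that $Q_iQ_j=U^*P_iP_jU=0$ for $i\neq j$, and that $\sum_{i=1}^n Q_i=U^*I_{\mathscr{E}}U=I_{\mathscr{E}}$, so $(Q_i)$ is an admissible family in \eqref{77}. Using $\langle Uy,Uy\rangle=\langle y,U^*Uy\rangle=\langle y,y\rangle$, hence $\left\|Uy\right\|=\left\|y\right\|$, we get
\[
\left\|\sum_{i=1}^n P_iUx_i\right\|=\left\|U\sum_{i=1}^n Q_ix_i\right\|=\left\|\sum_{i=1}^n Q_ix_i\right\|\le\left\|(x_1,\ldots,x_n)\right\|^{\mathscr{E}}_n .
\]
Taking the supremum over $(P_i)$ gives $\left\|U^{(n)}(x_1,\ldots,x_n)\right\|^{\mathscr{E}}_n\le\left\|(x_1,\ldots,x_n)\right\|^{\mathscr{E}}_n$, so $\left\|U^{(n)}\right\|\le 1=\left\|U\right\|$.

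Next I would pass to an arbitrary $T\in\mathcal{L}(\mathscr{E})$, which after normalization may be assumed to lie in $\mathcal{L}(\mathscr{E})_{[1]}$. Since $\mathcal{L}(\mathscr{E})$ is a unital $C^*$-algebra, the Russo--Dye theorem provides a sequence $(T_m)$ of finite convex combinations of elements of $\mathcal{U}(\mathcal{L}(\mathscr{E}))=\mathcal{U}(\mathscr{E})$ with $T_m\to T$ in norm. The amplification map $S\mapsto S^{(n)}$ is linear and, by \eqref{ampl}, satisfies $\left\|S^{(n)}\right\|\le n\left\|S\right\|$, hence is bounded; therefore $T_m^{(n)}\to T^{(n)}$, and $T_m^{(n)}$ is the same convex combination of the corresponding $U^{(n)}$'s. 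By the unitary case and the triangle inequality, $\left\|T_m^{(n)}\right\|\le 1$ for every $m$, so $\left\|T^{(n)}\right\|\le 1=\left\|T\right\|$. Combined with \eqref{ampl} this gives $\left\|T^{(n)}\right\|=\left\|T\right\|$ for all $n$, and the conclusion follows as explained in the first paragraph.

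I expect the only genuine subtlety to be the reduction to unitaries: for a general adjointable $T$ there is no way to pull $T$ out of the sum $\sum_{i=1}^n P_iTx_i$, so the direct estimate that worked for unitaries fails. The Russo--Dye theorem is precisely what bridges this gap, and the one technical point to be careful about is that the amplification operation is continuous, so that the convex-combination bound survives the limit.
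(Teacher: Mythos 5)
Your proposal is correct and follows essentially the same route as the paper: first show the Hilbert $C^*$-multi-norm is invariant under unitaries via the substitution $P_i\mapsto U^*P_iU$, then extend to all of $\mathcal{L}(\mathscr{E})$ by the Russo--Dye theorem together with the bound $\left\|S^{(n)}\right\|\leq n\left\|S\right\|$ to pass the convex-combination estimate through the limit. The only (immaterial) difference is organizational: the paper first deduces $\mathcal{ML}(\mathscr{E})=\mathcal{L}(\mathscr{E})$ from the fact that unitaries span $\mathcal{L}(\mathscr{E})$ and then computes $\left\|T\right\|_{mb}$, whereas you obtain both at once from the Russo--Dye step.
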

\begin{proof}
	Let $U\in\mathcal{U}\left(\mathscr{E}\right)$.  Fix $x_1,\ldots,x_n\in\mathscr{E}$ such that	$\left\|\left(x_1,\ldots,x_n \right) \right\|^{\mathscr{E}}_n\leq 1$. Then
	\begin{equation} \label{3}
	\left\|\left|P_1x_1 \right|^2+\dots+\left|P_nx_n \right|^2\right\| \leq 1
	\end{equation}
for some family $(P_i:1\leq i\leq n)$ of mutually orthogonal projections summing  to $1_{\mathcal{L}\left(\mathscr{E}\right)}$. Hence the family $(U^*P_iU:1\leq i\leq n)$ is consisting of mutually orthogonal projections summing to $1_{\mathcal{L}\left(\mathscr{E}\right)}$. It follows from \eqref{3} that
		\begin{align*} 
		\left\|\left(Ux_1,\ldots,Ux_n\right) \right\|^{\mathscr{E}}_n&=\sup\left\|\left|P_1Ux_1 \right|^2+\dots+\left|P_nUx_n\right|^2\right\|^{1/2}\\&=\sup\left\|\left|U^*P_1Ux_1 \right|^2+\dots+\left|U^*P_nUx_n\right|^2\right\|^{1/2}\leq 1.
	\end{align*}
     Therefore, $1=\left\|U\right\|\leq\left\|U^{(n)}\right\|\leq1$. Hence $\left\|U\right\|_{mb}=1$. Thus $U\in \mathcal{ML}(\mathscr{E})$. Since the unitaries span the $C^*$-algebra $\mathcal{L}(\mathscr{E})$, we arrive at	$\mathcal{ML}\left( \mathscr{E}\right) =\mathcal{L}(\mathscr{E})$.  From the Russo--Dye theorem, we observe that	$\mathcal{L}(\mathscr{E})_{[1]}=\overline{co}\left(\mathcal{U}\left(\mathscr{E}\right)\right)$.  Pick a nonzero operator $T$ in $\mathcal{L}(\mathscr{E})$. Put $S=T/\left\|T\right\|$. There exists a sequence $\left( S_n:n\in\mathbb{N}\right) $ in $\rm{co}\left(\mathcal{U}\left(\mathscr{E}\right)\right)$  such that $S_n\rightarrow S$ in the norm topology. We immediately see that  $\left\|S_n\right\|_{mb}\leq1$. Let $m\in\mathbb{N}$.  From \eqref{ampl}, we observe that  $\left\|S_n^{(m)}-S^{(m)}\right\|\leq m\left\|S_n-S\right\|$. Therefore, $\lim_{n\rightarrow \infty}\left\|S_n^{(m)}-S^{(m)}\right\|=0$. Thus
	\begin{equation}
	\left\|S^{(m)}\right\|=\lim_{n\rightarrow \infty}\left\|S_n^{(m)}\right\|\leq\sup_{n\in\mathbb{N}}\left\|S_n\right\|_{mb}\leq 1.
	\end{equation}
	Hence $1=\left\|S\right\|\leq\left\|S\right\|_{mb}=\lim_{m \to \infty}\left\|S^{(m)}\right\|\leq1$. Therefore $\left\|T\right\|_{mb}=\left\|T\right\|$.
\end{proof}

\section{New versions of $2$-summing norm }
As stated in \cite{17}, for a Banach space $\mathcal{E}$ and a nonzero power-normed space \linebreak$\left(\left({\mathcal{F}}^n,\left\|\cdot\right\|_n \right):n\in\mathbb{N}\right)$, there exists a norm on the space ${\mathcal{E}}^n$ defined by
\begin{equation}\label{l}
\left\| (x_1,\ldots, x_n)\right\|^{\mathcal{F}}_n= \sup\left\lbrace \left\|(Tx_1,\ldots,Tx_n)\right\|_n : T \in \mathbb{B}(\mathcal{E},\mathcal{F})_{[1]}\right\rbrace,
\end{equation}
where $x_1,\ldots,x_n \in \mathcal{E}$. Evidently, $(\left\|\cdot\right\|^{\mathcal{F}}_n: n \in \mathbb{N})$ is a power-norm based on $\mathcal{E}$ and  ${\mathcal{M}}(\mathcal{E},\mathcal{F}) = \mathbb{B}(\mathcal{E},\mathcal{F})$ with $\left\| T\right\|_{mb} = \left\| T\right\|$ for each $T \in {\mathcal{M}}(\mathcal{E},\mathcal{F})$, where ${\mathcal{M}}(\mathcal{E},\mathcal{F})$ is calculated with respect to the power-norm  $\left( \left\|\cdot\right\|^{\mathcal{F}}_n: n \in \mathbb{N}\right) $ based on $\mathcal{E}$. 
 
In this section, we introduce a power-norm on $\mathcal{L}(\mathscr{E},\mathfrak{A})$  by mimicking the power-norm introduced in  \cite{17}. To this end, we state some concepts from \cite{lance}; see also \cite{FRA1}.
 
Let $\mathscr{E}$ and $\mathscr{F}$ be Hilbert $\mathfrak{A}$-modules. The strict topology on $\mathcal{L}(\mathscr{E},\mathscr{F})$ is defined by the seminorms
\begin{equation*}
T\mapsto\left\|Tx\right\|\quad(x\in\mathscr{E}),\quad  T\mapsto\left\|T^*y\right\|\quad(y\in\mathscr{F}).
\end{equation*}
In virtue of \cite[p. 12]{lance}, the map
\begin{equation}\label{k}
\mathscr{E}\rightarrow\mathcal{K}(\mathscr{E}, \mathfrak{A})\quad x\mapsto T_x, 
\end{equation}
is isometric and surjective, where $T_x:\mathscr{E}\rightarrow\mathfrak{A}$ is defined by $y\mapsto\left\langle x,y\right\rangle$.
Now, we introduce a power-norm based on $\mathscr{E}$ depending on a power-norm based on $\mathfrak{A}$.
\begin{proposition}\label{18}
Let	$\mathscr{E}$ be a Hilbert $\mathfrak{A}$-module. Suppose that $\left(\left\|\cdot\right\|_n:n\in\mathbb{N}\right)$ is a power norm ($C^*$-power norm) based on $\mathfrak{A}$. Then
\begin{align*} 
\left\|(x_1,\ldots,x_n)\right\|^{\mathfrak{A}}_n   &=\sup\left\lbrace \left\|(\left\langle y,x_1\right\rangle,\ldots, \left\langle y,x_n\right\rangle)\right\|_n : y \in {\mathscr{E}}_{[1]}\right\rbrace\\&=\sup\left\lbrace \left\|(Tx_1,\ldots,Tx_n)\right\|_n : T \in{\mathcal{K}(\mathscr{E}, \mathfrak{A})}_{[1]}\right\rbrace\\&=\sup\left\lbrace \left\|(Tx_1,\ldots,Tx_n)\right\|_n : T \in{\mathcal{L}(\mathscr{E}, \mathfrak{A})}_{[1]}\right\rbrace,
\end{align*}
  and $\left( \left\|\cdot\right\|^{\mathfrak{A}}_n:n\in\mathbb{N}\right)$	is a power norm ($C^*$-power norm) based on $\mathscr{E}$. Moreover 
\begin{equation}\label{89}
\mathcal{ML}(\mathscr{E},\mathfrak{A})=\mathcal{L}(\mathscr{E}, \mathfrak{A})\quad\text{with}\quad\left\|S\right\|_{mb} = \left\|S\right\| \quad\text{for each}\quad S\in \mathcal{L}(\mathscr{E},\mathfrak{A}).
\end{equation}
\end{proposition}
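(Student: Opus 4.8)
The plan is to treat the statement in three parts: first the coincidence of the three suprema; then the power-norm axioms (and, under the stronger hypothesis, the $C^*$-power-norm axiom (B2)) for $\left(\|\cdot\|^{\mathfrak{A}}_n:n\in\mathbb{N}\right)$; and finally the identity $\mathcal{ML}(\mathscr{E},\mathfrak{A})=\mathcal{L}(\mathscr{E},\mathfrak{A})$ with equality of norms.

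For the three suprema, I would first invoke \eqref{k}: the isometric surjection $y\mapsto T_y$ carries $\mathscr{E}_{[1]}$ exactly onto $\mathcal{K}(\mathscr{E},\mathfrak{A})_{[1]}$, and $T_y(x_i)=\langle y,x_i\rangle$, so the first two suprema run over the same set of numbers and hence agree. Since $\mathcal{K}(\mathscr{E},\mathfrak{A})_{[1]}\subseteq\mathcal{L}(\mathscr{E},\mathfrak{A})_{[1]}$, the second supremum is dominated by the third. The only genuinely substantive inequality — and the main obstacle — is that the supremum over $\mathcal{L}(\mathscr{E},\mathfrak{A})_{[1]}$ is no larger than the first supremum. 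I would prove it with an approximate identity: fix $T\in\mathcal{L}(\mathscr{E},\mathfrak{A})_{[1]}$ and an approximate unit $(e_\lambda)$ of $\mathfrak{A}$ with $\|e_\lambda\|\le1$, and set $z_\lambda:=T^*e_\lambda\in\mathscr{E}$; then $\|z_\lambda\|\le\|T^*\|\,\|e_\lambda\|\le1$, so $z_\lambda\in\mathscr{E}_{[1]}$, while $\langle z_\lambda,x_i\rangle=\langle e_\lambda,Tx_i\rangle=e_\lambda(Tx_i)\to Tx_i$ in norm for each $i$. Using the order relation $\max_i\|a_i\|\le\|(a_1,\dots,a_n)\|_n\le\sum_i\|a_i\|$ valid in any power-normed space, coordinatewise convergence yields $\|(\langle z_\lambda,x_1\rangle,\dots,\langle z_\lambda,x_n\rangle)\|_n\to\|(Tx_1,\dots,Tx_n)\|_n$; since every term on the left is at most $\|(x_1,\dots,x_n)\|^{\mathfrak{A}}_n$, so is the limit, and taking the supremum over $T$ closes the loop. (If $\mathfrak{A}$ is unital the net is unnecessary: $z:=T^*1_{\mathfrak{A}}$ already satisfies $T=T_z$.)

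For the axioms, finiteness of the supremum is the Cauchy--Schwarz bound $\|\langle y,x_i\rangle\|\le\|y\|\,\|x_i\|$ fed into the right-hand side of the order relation; each map $(x_1,\dots,x_n)\mapsto\|(\langle y,x_1\rangle,\dots,\langle y,x_n\rangle)\|_n$ is a seminorm, so the supremum is a seminorm; and both $\|x_0\|^{\mathfrak{A}}_1=\|x_0\|$ and positive definiteness reduce to $\sup_{y\in\mathscr{E}_{[1]}}\|\langle y,x_0\rangle\|=\|T_{x_0}\|=\|x_0\|$, again from \eqref{k}. Axioms (A1)--(A3) then transfer coordinatewise from the corresponding axioms for $(\|\cdot\|_n)$ on $\mathfrak{A}$ (for (A2) using that the inner product is linear in its second argument), and in the $C^*$-power-norm case (B2) transfers the same way after writing $\langle y,x_ia_i\rangle=\langle y,x_i\rangle a_i$. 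None of this is delicate.

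Finally, $\mathcal{M}(\mathscr{E},\mathfrak{A})$ is computed with respect to $\left(\|\cdot\|^{\mathfrak{A}}_n\right)$ on $\mathscr{E}$ and $(\|\cdot\|_n)$ on $\mathfrak{A}$. For nonzero $S\in\mathcal{L}(\mathscr{E},\mathfrak{A})$ we have $S/\|S\|\in\mathcal{L}(\mathscr{E},\mathfrak{A})_{[1]}$, so the third description of $\|\cdot\|^{\mathfrak{A}}_n$ gives $\|(Sx_1,\dots,Sx_n)\|_n\le\|S\|\,\|(x_1,\dots,x_n)\|^{\mathfrak{A}}_n$, i.e. $\|S^{(n)}\|\le\|S\|$ for every $n$; together with the lower bound $\|S\|\le\|S^{(n)}\|$ from \eqref{ampl} this forces $\|S^{(n)}\|=\|S\|$ for all $n$ and hence $\|S\|_{mb}=\|S\|<\infty$. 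Thus every adjointable operator $\mathscr{E}\to\mathfrak{A}$ is multi-bounded, whence $\mathcal{ML}(\mathscr{E},\mathfrak{A})=\mathcal{L}(\mathscr{E},\mathfrak{A})\cap\mathcal{M}(\mathscr{E},\mathfrak{A})=\mathcal{L}(\mathscr{E},\mathfrak{A})$ with $\|S\|_{mb}=\|S\|$.
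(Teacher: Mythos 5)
Your proposal is correct, and its overall architecture matches the paper's: establish the three equivalent formulas, read off the axioms from the $y$-formula, and then get $\|S^{(n)}\|\le\|S\|$ from the $\mathcal{L}(\mathscr{E},\mathfrak{A})$-formula. The one genuine difference is at the key density step. The paper quotes the strict density of $\mathcal{K}(\mathscr{E},\mathfrak{A})_{[1]}$ in $\mathcal{L}(\mathscr{E},\mathfrak{A})_{[1]}$ (Lance, Proposition 1.3) to produce a net $(y_\lambda)$ in $\mathscr{E}_{[1]}$ with $\langle y_\lambda,x\rangle\to Tx$, and then upgrades coordinatewise convergence to convergence of the $n$-norms by appealing to \eqref{ampl}. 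You instead build the net by hand, $z_\lambda:=T^*e_\lambda$ for a positive approximate unit $(e_\lambda)$ of $\mathfrak{A}$ with $\|e_\lambda\|\le1$, which is a self-contained proof of exactly the instance of strict density that is needed (do note that you use $e_\lambda^*=e_\lambda$ when writing $\langle z_\lambda,x_i\rangle=e_\lambda(Tx_i)$, so the approximate unit should be taken self-adjoint, as is standard); and you make the norm-upgrade explicit via $\bigl|\,\|(a^\lambda_i)\|_n-\|(a_i)\|_n\,\bigr|\le\sum_i\|a^\lambda_i-a_i\|$, which is cleaner than the paper's citation. Your final step is also slightly more economical: having already proved the third formula, you get $\|(Sx_1,\dots,Sx_n)\|_n\le\|S\|\,\|(x_1,\dots,x_n)\|^{\mathfrak{A}}_n$ directly from the definition of the supremum, whereas the paper reruns the approximating-net computation for $S/\|S\|$. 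Both routes are valid; yours trades an external citation for a short explicit construction.
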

\begin{proof}
From $\left\|x \right\|^{\mathfrak{A}}_1=\sup\left\lbrace \left\|\left\langle x,y \right\rangle\right\|: y \in \mathscr{E}_{[1]}\right\rbrace=\left\|x \right\|$, where $x\in\mathscr{E}$, we immediately derive that $\left(\left\|\cdot\right\|^{\mathfrak{A}}_n:n\in\mathbb{N}\right)$ is a power norm ($C^*$-power norm) based on $\mathscr{E}$. Take  $T\in \mathcal{L}(\mathscr{E},\mathfrak{A})_{[1]}$. Since $\mathcal{K}(\mathscr{E},\mathfrak{A})_{[1]}$ is strictly dense in $\mathcal{L}(\mathscr{E},\mathfrak{A})_{[1]}$  \cite[Proposition 1.3]{lance}, there exists a net  $\left( y_{\lambda} :\lambda\in \Lambda\right)$ in ${\mathscr{E}}_{[1]}$ such that
\begin{equation}\label{7}
Tx=\lim_{\lambda}T_{\lambda}x=\lim_{\lambda}\left\langle y_{\lambda},x \right\rangle\quad (x\in\mathscr{E}).
\end{equation} 
We deduce from \eqref{ampl}  that
\begin{align*}
\left\|\left( Tx_1, \ldots, Tx_n\right) \right\|_n=\lim_{\lambda}\left\|(\left\langle y_{\lambda},x_1 \right\rangle,\ldots,\left\langle y_{\lambda},x_n \right\rangle)\right\|_n \quad (n\in\mathbb{N}).
\end{align*}
Thus, we observe that
\begin{align*}
\sup\left\lbrace \left\|(\left\langle y,x_1\right\rangle,\ldots, \left\langle y,x_n\right\rangle)\right\|_n : y \in {\mathscr{E}}_{[1]}\right\rbrace=\sup\left\lbrace \left\|(Tx_1,\ldots,Tx_n)\right\|_n : T \in {\mathcal{L}(\mathscr{E}, \mathfrak{A})}_{[1]}\right\rbrace.
\end{align*}
Suppose that $S\in\mathcal{L}(\mathscr{E},\mathfrak{A})$. Consider the net $\left( z_{\lambda} : {\lambda}\in{\Lambda}\right)$  as in equation \eqref{7} for the operator ${S}/{\left\|S\right\|}$. Fix $n\in\mathbb{N}$.
Pick $x_1,\ldots,x_n\in\mathscr{E}$ with $\left\|\left(x_1,\ldots, x_n \right)\right\|^{\mathfrak{A}}_n\leq 1$. Then 
\begin{align*} 
\left\|(Sx_1,\ldots,Sx_n)\right\|_n &=\left\|S\right\| \left\|\left( \frac{S}{\left\|S\right\|}x_1, \ldots, \frac{S}{\left\|S\right\|}x_n\right) \right\|_n \\ &=\left\|S\right\|\lim_{\lambda}\left\|(\left\langle z_{\lambda},x_1 \right\rangle,\ldots,\left\langle z_{\lambda},x_n \right\rangle)\right\|_n\\ &\leq\left\|S\right\|\left\|\left(x_1,\ldots x_n \right)\right\|^{\mathfrak{A}}_n\leq\left\|S\right\|.
\end{align*} 
Therefore, $\left\|S\right\|\leq\left\|S^{(n)}\right\|\leq\left\|S\right\|$. Hence $ \left\|S\right\|_{mb}=\lim_{n\rightarrow\infty}\left\|S^{(n)}\right\|=\left\|S\right\|$.
\end{proof}

\begin{definition} 
Let ${\mathscr{E}}$ be a Hilbert $\mathfrak{A}$-module. We write $\mathit{l}_n^2(\mathscr{E})$ for $\mathscr{E}^n$ as a Hilbert $\mathfrak{A}$-module with the inner product  given by
\begin{align*}
\left\langle(x_1,\ldots,x_n),(y_1,\ldots,y_n)\right\rangle_{\mathit{l}_n^2(\mathscr{E})}=\sum_{i=1}^{n} \left\langle x_i,y_i\right\rangle.
\end{align*} 
We shall abbreviate $\mathit{l}_n^2(\mathbb{C})$ by $\mathit{l}_n^2$.
\end{definition}

In the following definition, we present two power-norms $\mu_n$ and $\mu^*_n$ based on a Hilbert $\mathfrak{A}$-module ${\mathscr{E}}$ inspired by  the power-norm $\mu_{2,n}$; see \eqref{37}.
\begin{definition}
Let ${\mathscr{E}}$ be a Hilbert $\mathfrak{A}$-module. Given $x_1,\dots,x_n \in \mathscr{E}$, define 
\begin{equation}\label{muu}
\mu_{n}(x_1,\dots,x_n) = \sup\left\lbrace \left\|\left(\sum_{i=1}^n \left|Tx_i\right|^{2} \right)^{1/2}\right\| : T\in\mathcal{L}\left(\mathscr{E},\mathfrak{A}\right)_{[1]}\right\rbrace\,
\end{equation}
and furthermore,
\begin{equation}\label{muuu}
\mu^*_{n}(x_1,\dots,x_n) = \sup\left\lbrace \left\|\left(\sum_{i=1}^n \left|(Tx_i)^*\right|^{2} \right)^{1/2}\right\| : T\in\mathcal{L}\left(\mathscr{E},\mathfrak{A}\right)_{[1]}\right\rbrace\,.
\end{equation}
\end{definition}
 In the case where ${\mathfrak{A}}$ is commutative, $\mu^*_n=\mu_n$. Moreover, in the setting of Hilbert spaces, $\mu^*_n=\mu_n=\mu_{2,n}$. 
\begin{proposition}
Let ${\mathscr{E}}$ and ${\mathscr{F}}$ be  Hilbert $\mathfrak{A}$-modules. 
\begin{enumerate}
\item 
The sequence $\left(\mu_n: n\in\mathbb{N}\right)$ is a  power-norm based on $\mathscr{E}$ and
\begin{equation}\label{b}
\mu_{n}(x_1,\ldots,x_n)=\sup_{y\in\mathscr{E}_{[1]}}\left\|\left( \sum_{i=1}^{n}\left|\left\langle y,x_i\right\rangle\right|^2\right)^{1/2}\right\|.
\end{equation}
Moreover,
\begin{equation}\label{a}
\mu_{n}(Tx_1,\dots,Tx_n)\leq\left\|T\right\|\mu_{n}(x_1,\dots,x_n)\quad\left( T\in\mathcal{L}(\mathscr{E},\mathscr{F})\right). 
\end{equation} 
\item
The sequence $\left(\mu^*_n: n\in\mathbb{N}\right)$ is a  $C^*$-power-norm based on $\mathscr{E}$ and
\begin{equation}\label{bb}
\mu^*_{n}(x_1,\ldots,x_n)=\sup_{y\in\mathscr{E}_{[1]}}\left\|\left( \sum_{i=1}^{n}\left|\left\langle x_i,y\right\rangle\right|^2\right)^{1/2}\right\|.
\end{equation}
Moreover,
\begin{equation}\label{c}
\mu^*_{n}(Tx_1,\dots,Tx_n)\leq\left\|T\right\|\mu_{n}(x_1,\dots,x_n)\quad\left( T\in\mathcal{L}(\mathscr{E},\mathscr{F})\right). 
\end{equation} 
\end{enumerate}
\end{proposition}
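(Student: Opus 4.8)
The plan is to recognise $(\mu_n)$ and $(\mu^*_n)$ as instances of the power-norm construction $\|\cdot\|^{\mathfrak{A}}_n$ of Proposition~\ref{18} for two specific power-norms on $\mathfrak{A}$: that proposition will then deliver, for free, that $(\mu_n)$ and $(\mu^*_n)$ are power-norms based on $\mathscr{E}$ together with the supremum identities \eqref{b} and \eqref{bb}, and the upgrade of $(\mu^*_n)$ to a $C^*$-power-norm will reduce to checking the axiom (B2) for the relevant power-norm on $\mathfrak{A}$. The composition estimates \eqref{a} and \eqref{c} will then be handled separately by a short, self-contained argument.

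First I would introduce, for $a_1,\dots,a_n\in\mathfrak{A}$, the ``column'' quantity $c_n(a_1,\dots,a_n)=\|(\sum_{i=1}^{n}|a_i|^2)^{1/2}\|$ and the ``row'' quantity $r_n(a_1,\dots,a_n)=\|(\sum_{i=1}^{n}|a_i^*|^2)^{1/2}\|$, and check that both are power-norms on $\mathfrak{A}$. The key observation for $c_n$ is that it is exactly the norm of the Hilbert $\mathfrak{A}$-module $\mathit{l}_n^2(\mathfrak{A})$, since $\langle(a_i),(a_i)\rangle_{\mathit{l}_n^2(\mathfrak{A})}=\sum_i a_i^*a_i$; in particular $c_n$ is a genuine norm on $\mathfrak{A}^n$, and since $r_n(a_1,\dots,a_n)=c_n(a_1^*,\dots,a_n^*)$ with the involution a conjugate-linear isometric bijection of $\mathfrak{A}$, $r_n$ is a norm as well. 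Axioms (A1) and (A3) are immediate for both, $c_1(a)=r_1(a)=\|a\|$, and (A2) follows from $|\alpha_ia_i|^2=|\alpha_i|^2|a_i|^2\le(\max_j|\alpha_j|)^2|a_i|^2$ (and similarly with $a_i^*$) together with order-monotonicity of the $C^*$-norm. Using $T_yx_i=\langle y,x_i\rangle$ (see \eqref{k}) and $(T_yx_i)^*=\langle x_i,y\rangle$, one then sees that $\mu_n$ of \eqref{muu} coincides with the $\|\cdot\|^{\mathfrak{A}}_n$ determined by $c_n$, and $\mu^*_n$ of \eqref{muuu} coincides with the $\|\cdot\|^{\mathfrak{A}}_n$ determined by $r_n$. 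Proposition~\ref{18} therefore gives at once that $(\mu_n)$ and $(\mu^*_n)$ are power-norms based on $\mathscr{E}$, and its equality of the first and third of the three displayed terms is precisely formula \eqref{b} (respectively \eqref{bb}, after rewriting $\langle y,x_i\rangle^*=\langle x_i,y\rangle$).

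To upgrade $(\mu^*_n)$ to a $C^*$-power-norm it now suffices, again by Proposition~\ref{18}, to verify (B2) for $r_n$ on $\mathfrak{A}$. Fix $a_1,\dots,a_n,b_1,\dots,b_n\in\mathfrak{A}$ and put $M=\max_i\|b_i\|$. The decisive computation is $|(a_ib_i)^*|^2=(a_ib_i)(a_ib_i)^*=a_i(b_ib_i^*)a_i^*=a_i|b_i^*|^2a_i^*$: since $b_ib_i^*\le M^2 1$ (passing to the unitisation of $\mathfrak{A}$ if $\mathfrak{A}$ is non-unital) and conjugation $x\mapsto a_ixa_i^*$ is positive, $a_i|b_i^*|^2a_i^*\le M^2 a_ia_i^*=M^2|a_i^*|^2$; summing over $i$ and using order-monotonicity of the norm gives $r_n(a_1b_1,\dots,a_nb_n)\le M\,r_n(a_1,\dots,a_n)$, which is (B2). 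Hence $r_n$ is a $C^*$-power-norm on $\mathfrak{A}$ and, by Proposition~\ref{18}, $(\mu^*_n)$ is a $C^*$-power-norm based on $\mathscr{E}$. I would remark that this device is genuinely unavailable for $c_n$, where $|a_ib_i|^2=b_i^*|a_i|^2b_i$ and conjugating by $b_i$ only produces $b_i^*|a_i|^2b_i\le\|a_i\|^2b_i^*b_i$, the wrong inequality — in keeping with the assertion that $(\mu_n)$ need only be a power-norm.

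Finally, for \eqref{a}, let $T\in\mathcal{L}(\mathscr{E},\mathscr{F})$; the case $T=0$ is trivial, so assume $T\neq 0$. For any $S\in\mathcal{L}(\mathscr{F},\mathfrak{A})_{[1]}$ the composite $ST\in\mathcal{L}(\mathscr{E},\mathfrak{A})$ has $\|ST\|\le\|T\|$, so $ST/\|T\|\in\mathcal{L}(\mathscr{E},\mathfrak{A})_{[1]}$ and therefore $\|(\sum_i|STx_i|^2)^{1/2}\|=\|T\|\,\|(\sum_i|(ST/\|T\|)x_i|^2)^{1/2}\|\le\|T\|\,\mu_n(x_1,\dots,x_n)$; taking the supremum over $S$ yields \eqref{a}. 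The identical argument, with $|\cdot|$ replaced by $|(\cdot)^*|$ and $\mu_n$ replaced by $\mu^*_n$, establishes \eqref{c}. I do not expect any serious obstacle in this proof; the one step that requires care is the positivity argument of the preceding paragraph, the subtlety being that $|(a_ib_i)^*|^2$ equals $a_i|b_i^*|^2a_i^*$, so that one must invoke positivity of conjugation by $a_i$ — precisely the asymmetry that makes $r_n$ (hence $\mu^*_n$) satisfy (B2) while $c_n$ (hence $\mu_n$) in general does not.
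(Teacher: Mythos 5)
Your proposal is correct and follows essentially the same route as the paper: both realise $\mu_n$ and $\mu^*_n$ as the construction of Proposition~\ref{18} applied to the $\mathit{l}_n^2(\mathfrak{A})$-norm and its starred (row) version, so that the power-norm axioms and the identities \eqref{b} and \eqref{bb} come for free. The only differences are cosmetic: you verify axiom (B2) for the row norm explicitly (the paper asserts it as ``clear''), and you prove \eqref{a} and \eqref{c} by composing with $S\in\mathcal{L}(\mathscr{F},\mathfrak{A})_{[1]}$ instead of the paper's equivalent substitution $y\mapsto T^*y/\left\|T\right\|$.
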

\begin{proof}
(1) Consider the power-normed space  $\left( \left( \mathfrak{A}^n,\left\|\cdot\right\|_{\mathit{l}_n^2(\mathfrak{A})}\right) : n\in\mathbb{N}\right)$.  From \eqref{muu} and Proposition \ref{18}, we infer  that
\begin{align*}
\mu_{n}\left( x_1,\ldots,x_n\right) &=\sup\left\lbrace \left\|\left( Tx_1,\ldots,Tx_n\right) \right\|_{\mathit{l}_n^2(\mathfrak{A})}: T\in\mathcal{L}\left(\mathscr{E},\mathfrak{A}\right)_{[1]}\right\rbrace\\&=\sup_{y\in\mathscr{E}_{[1]}} \left\|(\left\langle y,x_1\right\rangle,\ldots, \left\langle y,x_n\right\rangle)\right\|_{\mathit{l}_n^2(\mathfrak{A})}\\&=\sup_{y\in\mathscr{E}_{[1]}}\left\|\left( \sum_{i=1}^{n}\left|\left\langle y,x_i\right\rangle\right|^2\right)^{1/2}\right\|.
\end{align*}
 Hence $\left(\mu_n:n\in\mathbb{N}\right)$ is a power-norm based on $\mathscr{E}$. Thus, for $T\in\mathcal{L}(\mathscr{E},\mathscr{F})$,
\begin{align*}
\mu_{n}(Tx_1,\ldots,Tx_n)&=\sup_{y\in\mathscr{E}_{[1]}}\left\|\left( \sum_{i=1}^{n}\left|\left\langle T^*y,x_i\right\rangle\right|^2\right)^{1/2}\right\|\\&=\left\|T\right\| \sup_{y\in\mathscr{E}_{[1]}}\left\|\left(\sum_{i=1}^{n}\left|\left\langle 
\frac{T^*}{\left\|T\right\|}y,x_i\right\rangle\right|^2\right)^{1/2}\right\|\\&\leq\left\|T\right\| \sup_{z\in\mathscr{E}_{[1]}}\left\|\left(\sum_{i=1}^{n}\left|\left\langle z,x_i\right\rangle\right|^2\right)^{1/2}\right\|=\left\|T\right\|\mu_{n}(x_1,\ldots,x_n).
\end{align*}
(2) Define a norm  $\left\|(b_1,\ldots,b_n)\right\|^*_n=\left\|(b^*_1,\ldots,b^*_n)\right\|_{\mathit{l}_n^2(\mathfrak{A})}$ in $\mathfrak{A}^n$.  Clearly $\left(\left\|\cdot\right\|^*_n:n\in\mathbb{N}\right)$ is a $C^*$-power-norm based on $\mathfrak{A}$. It follows from Proposition \ref{18} that
\begin{align*}
\mu^*_{n}(x_1,\ldots,x_n)&=\sup\left\lbrace \left\|\left( Tx_1,\ldots,Tx_n\right) \right\|^*_n: T\in\mathcal{L}\left(\mathscr{E},\mathfrak{A}\right)_{[1]}\right\rbrace\\&=\sup_{y\in\mathscr{E}_{[1]}} \left\|(\left\langle x_1,y\right\rangle,\ldots, \left\langle x_n,y\right\rangle)\right\|_{\mathit{l}_n^2(\mathfrak{A})}\\&=\sup_{y\in\mathscr{E}_{[1]}}\left\|\left( \sum_{i=1}^{n}\left|\left\langle x_i,y\right\rangle\right|^2\right)^{1/2}\right\|.
\end{align*}
Hence, $\left(\mu^*_n:n\in\mathbb{N}\right)$ is a $C^*$-power-norm based on $\mathscr{E}$. Thus, for $T\in\mathcal{L}(\mathscr{E},\mathscr{F})$,
\begin{align*}
\mu^*_{n}(Tx_1,\ldots,Tx_n)&=\sup_{y\in\mathscr{E}_{[1]}}\left\|\left( \sum_{i=1}^{n}\left|\left\langle x_i,T^*y\right\rangle\right|^2\right)^{1/2}\right\|\\&\leq\left\|T\right\| \sup_{y\in\mathscr{E}_{[1]}}\left\|\left(\sum_{i=1}^{n}\left|\left\langle 
x_i,\frac{T^*}{\left\|T\right\|}y\right\rangle\right|^2\right)^{1/2}\right\|\leq\left\|T\right\|\mu^*_{n}(x_1,\ldots,x_n).
\end{align*}
\end{proof}
\begin{example}
Let $\mathfrak{A}$ be a $C^*$-algebra and $\mathfrak{I}$ be a closed right ideal of $\mathfrak{A}$. Clearly $\mathfrak{I}$ is a closed submodule of $\mathfrak{A}$. Then
\begin{equation}\label{44}
\mu_n(b_1,\ldots,b_n)=\left\|(b_1,\ldots,b_n)\right\|_{\mathit{l}_n^2(\mathfrak{I})}
\end{equation}
 To see this, suppose that $b_1,\ldots,b_n\in\mathfrak{I}$. It follows from \cite[Theorem 3.1.2]{mor} that there exists an increasing net $\left(u_{\lambda}\right)_{\lambda\in\Lambda} $ of positive elements in $\mathfrak{I}_{[1]}$  such that $\lim_{\lambda}u_{\lambda}a=a$. Hence,
	\begin{align*}
	\left\|(b_1,\ldots,b_n)\right\|_{\mathit{l}_n^2(\mathfrak{I})}=\lim_{\lambda}\left\|(u_{\lambda}b_1,\ldots,u_{\lambda}b_n)\right\|_{\mathit{l}_n^2(\mathfrak{I})}&\leq\sup_{\left\|c\right\|=1 }\left\|\sum_{i=1}^n b^*_ic^*cb_i\right\|\\&=\mu_n(b_1,\ldots,b_n)\\&\leq \left\|(b_1,\ldots,b_n)\right\|_{\mathit{l}_n^2(\mathfrak{I})}.	
\end{align*}
\end{example}

 In this section, we employ $\mu^*_n$  and obtain some of its fundamental properties.
\begin{proposition}
Let $\mathscr{E}$ be a Hilbert $C^*$-module over a $C^*$-algebra $\mathfrak{A}$. Then
\begin{equation}\label{yi}
\mu^*_{n}(x_1,\ldots,x_n)=\min\left\lbrace\lambda>0:\left( \sum_{i=1}^{n}\left| \left\langle x_i,x\right\rangle\right|^2\right)^{1/2}\leq\lambda\left|x\right|\,\ \text{for all $x\in\mathscr{E}$}\right\rbrace
\end{equation}
for $x_1,\ldots,x_n\in\mathscr{E}$.
\end{proposition}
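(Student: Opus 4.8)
The plan is to compare $\mu^*_n(x_1,\ldots,x_n)$ with $\lambda_0:=\inf\big\{\lambda>0:\big(\sum_{i=1}^n|\langle x_i,x\rangle|^2\big)^{1/2}\le\lambda|x|\ \text{for all }x\in\mathscr{E}\big\}$: I would show first that $\mu^*_n(x_1,\ldots,x_n)\le\lambda_0$, and then that $\mu:=\mu^*_n(x_1,\ldots,x_n)$ itself lies in the set defining $\lambda_0$, which gives $\mu^*_n(x_1,\ldots,x_n)=\lambda_0=\min\{\cdots\}$ with the minimum attained. Throughout I would use the formula \eqref{bb}, $\mu^*_n(x_1,\ldots,x_n)=\sup_{y\in\mathscr{E}_{[1]}}\big\|\big(\sum_{i=1}^n|\langle x_i,y\rangle|^2\big)^{1/2}\big\|$.

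For $\mu^*_n(x_1,\ldots,x_n)\le\lambda_0$: fix an admissible $\lambda$ and $y\in\mathscr{E}_{[1]}$. Evaluating the defining inequality at $x=y$ and passing to $C^*$-norms — the norm is monotone on $\mathfrak{A}_+$ and $\||y|\|=\|y\|\le1$ — yields $\big\|\big(\sum_i|\langle x_i,y\rangle|^2\big)^{1/2}\big\|\le\lambda$; taking the supremum over $y\in\mathscr{E}_{[1]}$ and using \eqref{bb} gives $\mu^*_n(x_1,\ldots,x_n)\le\lambda$, hence $\mu^*_n(x_1,\ldots,x_n)\le\lambda_0$. It is essential to argue through the norm here rather than by squaring the inequality $(\cdots)^{1/2}\le\lambda|x|$, since squaring is not operator monotone.

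For the reverse inequality I would package the data into a single adjointable operator. By \eqref{k}, each $T_{x_i}:\mathscr{E}\to\mathfrak{A}$, $y\mapsto\langle x_i,y\rangle$, belongs to $\mathcal{K}(\mathscr{E},\mathfrak{A})\subseteq\mathcal{L}(\mathscr{E},\mathfrak{A})$, so the column operator $\Phi:\mathscr{E}\to\mathit{l}_n^2(\mathfrak{A})$, $\Phi x=(\langle x_1,x\rangle,\ldots,\langle x_n,x\rangle)$, is adjointable with $\Phi^*(a_1,\ldots,a_n)=\sum_i T_{x_i}^*a_i$. Since $\langle\Phi x,\Phi x\rangle_{\mathit{l}_n^2(\mathfrak{A})}=\sum_i|\langle x_i,x\rangle|^2$, we get $\|\Phi x\|=\big\|\big(\sum_i|\langle x_i,x\rangle|^2\big)^{1/2}\big\|$ for all $x$, whence $\|\Phi\|=\mu$ by \eqref{bb}. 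Now $\Phi^*\Phi$ is a positive element of the unital $C^*$-algebra $\mathcal{L}(\mathscr{E})$ with $\|\Phi^*\Phi\|=\|\Phi\|^2=\mu^2$, so $\Phi^*\Phi\le\mu^2 I_{\mathscr{E}}$; writing $\mu^2 I_{\mathscr{E}}-\Phi^*\Phi=R^*R$ and pairing with $x$ gives $\sum_i|\langle x_i,x\rangle|^2=\langle\Phi x,\Phi x\rangle=\langle x,\Phi^*\Phi x\rangle\le\mu^2\langle x,x\rangle=\mu^2|x|^2$ for every $x\in\mathscr{E}$. Applying the operator monotone function $t\mapsto t^{1/2}$ then yields $\big(\sum_i|\langle x_i,x\rangle|^2\big)^{1/2}\le\mu|x|$, so $\mu$ is admissible; with the previous step this gives $\mu^*_n(x_1,\ldots,x_n)=\lambda_0$, attained. (If $\mu=0$, i.e.\ all $x_i=0$, every $\lambda>0$ works and \eqref{yi} holds with the right side read as an infimum, equal to $0$.)

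I expect the main obstacle to be exactly the middle step of the last paragraph: recognizing that the right-hand side of \eqref{yi} is the operator norm $\|\Phi\|$ and then upgrading the scalar bound $\|\Phi\|=\mu$ to the modular order inequality $\langle\Phi x,\Phi x\rangle\le\mu^2\langle x,x\rangle$ via $\Phi^*\Phi\le\|\Phi^*\Phi\|I_{\mathscr{E}}$ in the $C^*$-algebra $\mathcal{L}(\mathscr{E})$. Everything else — adjointability of $\Phi$, monotonicity of the norm on $\mathfrak{A}_+$, operator monotonicity of $t^{1/2}$, and the degenerate case $x_i=0$ — is routine.
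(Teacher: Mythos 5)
Your argument is correct and follows essentially the same route as the paper: both introduce the adjointable column operator $x\mapsto(\langle x_1,x\rangle,\ldots,\langle x_n,x\rangle)$ from $\mathscr{E}$ to $\mathit{l}_n^2(\mathfrak{A})$, identify its operator norm with $\mu^*_n(x_1,\ldots,x_n)$ via \eqref{bb}, and upgrade the norm bound to the order inequality $|Tx|\le\|T\|\,|x|$ (the paper cites \cite[Proposition 1.2]{lance} for exactly the step you prove inline through $\Phi^*\Phi\le\|\Phi\|^2 I_{\mathscr{E}}$ and operator monotonicity of the square root). Your remark on the degenerate case $x_1=\dots=x_n=0$, where the minimum is not attained and the right-hand side must be read as an infimum, is a point the paper glosses over with ``clearly, the infimum is indeed a minimum.''
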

\begin{proof}
 We first assume that $\mathfrak{A}$ is unital. It follows from  the Cauchy--Schwarz inequality that $\left( \sum_{i=1}^{n}\left| \left\langle x_i,x\right\rangle\right|^2\right)^{1/2}\leq \left(\sum_{i=1}^{n}  \left\| x_i\right\|^2\right)^{1/2}\left|x\right|$ for all $x\in\mathscr{E}$. Thus the set stated in \eqref{yi} is nonempty. Suppose   $\lambda>0$ such that  $\left\| \left( \sum_{i=1}^{n}\left| \left\langle x_i,x\right\rangle\right|^2\right)^{1/2}\right\| \leq\lambda\left\|x\right\|$ for all $x\in\mathscr{E}$. Set $	T:\mathscr{E}\rightarrow\mathit{l}_n^2(\mathfrak{A})$ defined by $Tx=\sum_{i=1}^{n}\delta_i\left\langle x_i,x\right\rangle$, where $\delta_i=\left(0,\ldots,0,1_{\mathfrak{A}},0,\ldots,0 \right) $ with the unit being the $i$th entry. The operator $T$ is adjointable with $T^*:\mathit{l}_n^2(\mathfrak{A})\rightarrow\mathscr{E}$ defined by $T^*(a_1,\ldots,a_n)=\sum_{i=1}^{n}x_ia_i$. Since
\begin{align*}
\left\| \left\langle Tx,Tx\right\rangle\right\|=\left\| \sum_{i=1}^{n}\left\langle x,x_i\right\rangle\left\langle x_i,x\right\rangle\right\|\leq\lambda^2\left\|x\right\|^2,
\end{align*}
we arrive at $\left\|T\right\|\leq\lambda$. Thus $\left|Tx\right|\leq\lambda\left|x\right|$  \cite[Proposition 1.2]{lance} or, equivalently, $\left( \sum_{i=1}^{n}\left| \left\langle x_i,x\right\rangle\right|^2\right)^{1/2}\leq\lambda\left|x\right|$. From \eqref{bb}, we infer that
\begin{align*}
\mu^*_{n}(x_1,\ldots,x_n)&=\sup_{y\in\mathscr{E}_{[1]}}\left\|\left( \sum_{i=1}^{n}\left|\left\langle x_i,y\right\rangle\right|^2\right)^{1/2}\right\|\\&=\inf\left\lbrace\lambda>0:\left\| \left( \sum_{i=1}^{n}\left|\left\langle x_i,x\right\rangle\right|^2\right)^{1/2}\right\| \leq\lambda\left\|x\right\|\,\ \text{for all $x\in\mathscr{E}$}\right\rbrace\\&=\inf\left\lbrace\lambda>0:\left( \sum_{i=1}^{n}\left| \left\langle x_i,x\right\rangle\right|^2\right)^{1/2}\leq\lambda\left|x\right|\,\ \text{for all $x\in\mathscr{E}$}\right\rbrace.
\end{align*}
Clearly, the infimum is indeed a minimum. 

In the case when $\mathfrak{A}$ has no unit, we use the minimal unitization $\mathfrak{A}\oplus \mathbb{C}$ of $\mathfrak{A}$, and consider $\mathscr{X}$ as a Hilbert $\mathfrak{A}\oplus \mathbb{C}$-module. The right action is defined by $x\left(a+\eta\right)=xa+\eta x$ for each $x\in\mathscr{E}$, $a\in\mathfrak{A}$, and $\eta\in\mathbb{C}$. Thus, equation \eqref{yi} is established.
\end{proof}
Inspired by \cite[p. 26]{mu}, we present the following lemma  used in the proof of Theorem \ref{theorem}.
\begin{lemma}\label{star}
Let $\mathscr{E}$ be a Hilbert $\mathfrak{A}$-module. Then 
\begin{equation}
\mu^*_{n}(x_1,\ldots,x_n)=\sup\left\lbrace\left\| \sum_{i=1}^{n}x_ia_i\right\| : \left\| \left( a_1,\ldots,a_n\right)\right\|_{\mathit{l}^2_n(\mathfrak{A})}\leq 1\right\rbrace
\end{equation}
for $x_1,\ldots,x_n\in\mathscr{E}$.
\end{lemma}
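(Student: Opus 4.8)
The plan is to realize both sides of the identity as the norm of a single adjointable operator, computed in two dual ways. Given $x_1,\ldots,x_n\in\mathscr{E}$, I would consider the map $T\colon\mathit{l}^2_n(\mathfrak{A})\to\mathscr{E}$ defined by $T(a_1,\ldots,a_n)=\sum_{i=1}^n x_ia_i$. A short check of the defining relation of the adjoint shows that $T$ is adjointable with $T^*\colon\mathscr{E}\to\mathit{l}^2_n(\mathfrak{A})$ given by $T^*y=(\langle x_1,y\rangle,\ldots,\langle x_n,y\rangle)$: for $(a_1,\ldots,a_n)\in\mathit{l}^2_n(\mathfrak{A})$ and $y\in\mathscr{E}$ one has
\[
\langle T(a_1,\ldots,a_n),y\rangle=\Big\langle \sum_{i=1}^n x_ia_i,\,y\Big\rangle=\sum_{i=1}^n a_i^*\langle x_i,y\rangle=\langle(a_1,\ldots,a_n),T^*y\rangle_{\mathit{l}^2_n(\mathfrak{A})}.
\]
In particular $T\in\mathcal{L}(\mathit{l}^2_n(\mathfrak{A}),\mathscr{E})$, so $\|T\|=\|T^*\|$ since $\mathcal{L}$ is a $C^*$-algebra.

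The right-hand side of the lemma is, essentially by definition, $\|T\|$, because $\|T\|=\sup\{\|T(a_1,\ldots,a_n)\|:\|(a_1,\ldots,a_n)\|_{\mathit{l}^2_n(\mathfrak{A})}\le1\}=\sup\{\|\sum_{i=1}^n x_ia_i\|:\|(a_1,\ldots,a_n)\|_{\mathit{l}^2_n(\mathfrak{A})}\le1\}$. For the left-hand side I would compute $\|T^*\|$: for $y\in\mathscr{E}_{[1]}$,
\[
\|T^*y\|=\big\|(\langle x_1,y\rangle,\ldots,\langle x_n,y\rangle)\big\|_{\mathit{l}^2_n(\mathfrak{A})}=\Big\|\sum_{i=1}^n\langle x_i,y\rangle^*\langle x_i,y\rangle\Big\|^{1/2}=\Big\|\Big(\sum_{i=1}^n|\langle x_i,y\rangle|^2\Big)^{1/2}\Big\|,
\]
and taking the supremum over $\mathscr{E}_{[1]}$ gives $\|T^*\|=\mu^*_n(x_1,\ldots,x_n)$ by formula \eqref{bb}. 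Combining this with $\|T\|=\|T^*\|$ yields the assertion.

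The argument has essentially no hard step; the only point requiring a little care is the verification that $T$ is genuinely adjointable with the stated $T^*$ — the short computation displayed above — which notably does not require $\mathfrak{A}$ to be unital. Alternatively, one could prove the two inequalities directly: using the identity $\|z\|=\sup_{y\in\mathscr{E}_{[1]}}\|\langle y,z\rangle\|$ for $z\in\mathscr{E}$ together with the Cauchy--Schwarz inequality in $\mathit{l}^2_n(\mathfrak{A})$, one evaluates $\sup_{\|(a_i)\|_{\mathit{l}^2_n(\mathfrak{A})}\le1}\|\sum_i\langle y,x_i\rangle a_i\|=\|(\langle x_1,y\rangle,\ldots,\langle x_n,y\rangle)\|_{\mathit{l}^2_n(\mathfrak{A})}$ for each fixed $y$, and then interchanges the two suprema; this again reduces the claim to \eqref{bb}.
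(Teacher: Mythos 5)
Your proof is correct, but it takes a genuinely different route from the paper's. The paper argues by two inequalities directly from the definitions: for $\alpha\leq\beta$ it picks a near-optimal $y\in\mathscr{E}_{[1]}$ in formula \eqref{bb} and tests the right-hand side against the specific tuple $a_i=\langle x_i,y\rangle/\gamma$ with $\gamma=\|(\sum_i|\langle x_i,y\rangle|^2)^{1/2}\|$; for $\beta\leq\alpha$ it fixes $(a_i)$ in the unit ball of $\mathit{l}^2_n(\mathfrak{A})$, sets $y_0=\sum_i x_ia_i/\|\sum_i x_ia_i\|$, and applies the Cauchy--Schwarz inequality in $\mathit{l}^2_n(\mathfrak{A})$. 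You instead package both sides as $\|T\|$ and $\|T^*\|$ for the adjointable operator $T(a_1,\ldots,a_n)=\sum_i x_ia_i$ and invoke $\|T\|=\|T^*\|$. This is clean and avoids the small edge cases the paper has to handle (the case $\alpha=0$, and the implicit division by $\|\sum_i x_ia_i\|$, which could vanish). It is worth noting that the operator you introduce is exactly the $T_x$ of the paper's Theorem \ref{theorem}, which the paper proves \emph{using} this lemma (the isometry $\|T_x\|=\mu^*_n(x)$ is deduced from Lemma \ref{star}); your argument effectively proves the adjointability and the identity $\|T_x^*\|=\mu^*_n(x)$ first and derives the lemma as a corollary. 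There is no circularity in your version since your verification of the adjoint relation $\langle T(a_1,\ldots,a_n),y\rangle=\sum_i a_i^*\langle x_i,y\rangle=\langle(a_1,\ldots,a_n),T^*y\rangle_{\mathit{l}^2_n(\mathfrak{A})}$ is self-contained, and, as you observe, it does not need $\mathfrak{A}$ to be unital; but if one adopts your proof one should reorganize the presentation so that Theorem \ref{theorem} no longer cites the lemma for a fact already established.
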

\begin{proof}
Put $\alpha=\mu^*_{n}(x_1,\ldots,x_n)$ and $\beta=\sup\left\lbrace\left\| \sum_{i=1}^{n}x_ia_i\right\| : \left\| \sum_{i=1}^{n}\left|a_i\right|^2\right\|\leq 1 \right\rbrace$. If $\alpha=0$, then $\alpha\leq\beta$. Let $\alpha\neq0$. It follows from \eqref{bb} that there exists  $y\in\mathscr{E}_{[1]}$ such that $\gamma=\left\|\left( \sum_{i=1}^{n}\left|\left\langle x_i,y\right\rangle\right|^2\right)^{1/2}\right\|\neq 0$. Then
\begin{equation}\label{sigma}
\left\|\left(\sum_{i=1}^{n}\left|\left\langle x_i,y\right\rangle\right|^2\right)^{1/2}\right\|=\left\| \sum_{i=1}^{n}\left\langle y,x_i\right\rangle\frac{\left\langle x_i,y   \right\rangle}{\gamma}\right\|.
\end{equation}
Put $a_i={\left\langle x_i,y \right\rangle}/{\gamma}$. Hence $\left\| \sum_{i=1}^{n}\left|a_i\right|^2\right\|= 1$. It follows from \eqref{sigma} that
\begin{align*}
\left\|\left( \sum_{i=1}^{n}\left|\left\langle x_i,y\right\rangle\right|^2\right)^{1/2}\right\|=\left\|\left\langle y, \sum_{i=1}^{n}x_ia_i \right\rangle\right\|\leq \left\| \sum_{i=1}^{n}x_ia_i\right\|\leq\beta.
\end{align*}
Therefore $\alpha\leq\beta$. 

Conversely, fix $a_1,\ldots,a_n\in\mathfrak{A}$ such that $\left\|\sum_{i=1}^{n}\left|a_i\right|^2\right\|\leq 1$. For each $x_1,\ldots,x_n\in\mathscr{E}$, set $y_0={ \sum_{i=1}^{n}x_ia_i}/{\left\| \sum_{i=1}^{n}x_ia_i\right\|}$. Then
\begin{align*}
 \left\| \sum_{i=1}^{n}x_ia_i\right\|=\left\|\sum_{i=1}^{n}\left\langle y_0,x_i\right\rangle a_i\right\|&=\left\|\left\langle\left(\left\langle x_1,y_0\right\rangle_{\mathscr{E}},\ldots,\left\langle x_n,y_0\right\rangle_{\mathscr{E}} \right) ,\left(a_1,\ldots,a_n \right) \right\rangle_{\mathit{l}^2_n(\mathfrak{A})} \right\| \\&\leq\mu^*_{n}(x_1,\ldots,x_n) \tag{by Cauchy--Schwarz inequality},
\end{align*}
whence $\beta\leq\alpha$. Hence $\alpha=\beta$.
\end{proof}
 In virtue of \cite[p. 22]{32}, we present the following theorem.
\begin{theorem}\label{theorem}
Let $\mathscr{E}$ be a Hilbert $\mathfrak{A}$-module. For $x=(x_1,\ldots,x_n)\in\mathscr{E}^n$, consider
\begin{align*}
T_x:\mathit{l}_n^2(\mathfrak{A})\rightarrow\mathscr{E},\quad \left(a_1,\ldots,a_n\right)\mapsto\sum_{i=1}^{n}x_ia_i.
\end{align*}
The following statements hold.
\begin{enumerate}
	\item
	The map  $T:\left(\mathscr{E}^n,\mu^*_n\right)\rightarrow\mathcal{L}\left( \mathit{l}_n^2(\mathfrak{A}),\mathscr{E}\right)$ defined by $	x\mapsto T_x$ is isometric.
	\item 
	$\rm{ran}(T)=\mathcal{K}\left(\mathit{l}_n^2(\mathfrak{A}),\mathscr{E}\right)$.
	\item
	 If $\mathfrak{A}$ is unital, then the map is surjective.
\end{enumerate}
\end{theorem}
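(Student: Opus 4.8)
The plan is to verify the three assertions in turn: that $T_x$ is adjointable, that $x \mapsto T_x$ is isometric with respect to $\mu^*_n$, and that it is surjective when $\mathfrak A$ is unital. For adjointability, I would exhibit the candidate adjoint explicitly: define $S:\mathscr E \to \mathit l_n^2(\mathfrak A)$ by $Sy = (\langle x_1,y\rangle,\ldots,\langle x_n,y\rangle)$, and check directly that $\langle T_x(a_1,\ldots,a_n), y\rangle_{\mathscr E} = \sum_i \langle x_i a_i, y\rangle = \sum_i a_i^* \langle x_i,y\rangle = \langle (a_1,\ldots,a_n), Sy\rangle_{\mathit l_n^2(\mathfrak A)}$, using property (iii)/(iv) of the inner product and the definition of the inner product on $\mathit l_n^2(\mathfrak A)$. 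This shows $T_x \in \mathcal L(\mathit l_n^2(\mathfrak A),\mathscr E)$ with $T_x^* = S$; in particular $T_x$ is automatically bounded. One should also record that $T_x$ is $\mathfrak A$-linear, which is immediate from the right-module structure on both sides.

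The isometry claim is where Lemma \ref{star} does the work. By definition of the operator norm, $\left\|T_x\right\| = \sup\left\{\left\|T_x(a_1,\ldots,a_n)\right\| : \left\|(a_1,\ldots,a_n)\right\|_{\mathit l_n^2(\mathfrak A)}\leq 1\right\} = \sup\left\{\left\|\sum_{i=1}^n x_i a_i\right\| : \left\|(a_1,\ldots,a_n)\right\|_{\mathit l_n^2(\mathfrak A)}\leq 1\right\}$, and this is precisely the quantity shown in Lemma \ref{star} to equal $\mu^*_n(x_1,\ldots,x_n)$. So $\left\|T_x\right\| = \mu^*_n(x)$, which is exactly the statement that the map is isometric; since it is also clearly linear in $x$ (from $T_{x+y} = T_x + T_y$ and $T_{\lambda x} = \lambda T_x$), this settles the second assertion with essentially no further computation.

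For surjectivity under the assumption that $\mathfrak A$ is unital, I would start from an arbitrary $R \in \mathcal L(\mathit l_n^2(\mathfrak A),\mathscr E)$ and reconstruct the tuple. Let $\delta_i = (0,\ldots,0,1_{\mathfrak A},0,\ldots,0) \in \mathit l_n^2(\mathfrak A)$ (the unit in the $i$th slot), which requires the unit, and set $x_i := R\delta_i \in \mathscr E$. Since any $(a_1,\ldots,a_n) = \sum_{i=1}^n \delta_i a_i$ in $\mathit l_n^2(\mathfrak A)$ and $R$ is $\mathfrak A$-linear and additive, $R(a_1,\ldots,a_n) = \sum_{i=1}^n (R\delta_i) a_i = \sum_{i=1}^n x_i a_i = T_x(a_1,\ldots,a_n)$, so $R = T_x$ with $x = (x_1,\ldots,x_n)$. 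This uses only that $R$ is a module map and that the $\delta_i$ generate $\mathit l_n^2(\mathfrak A)$ as a module, both of which are routine.

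The one point deserving genuine care — and the likely main obstacle if anything is — is the interface between the operator norm on $\mathcal L(\mathit l_n^2(\mathfrak A),\mathscr E)$ and the supremum appearing in Lemma \ref{star}: one must be sure the lemma's supremum is taken over $(a_1,\ldots,a_n)$ with $\left\|(a_1,\ldots,a_n)\right\|_{\mathit l_n^2(\mathfrak A)} = \left\|\sum_i |a_i|^2\right\|^{1/2} \leq 1$, matching exactly the closed unit ball of $\mathit l_n^2(\mathfrak A)$ used to compute $\left\|T_x\right\|$. Given the way Lemma \ref{star} is phrased (with the constraint $\left\|(a_1,\ldots,a_n)\right\|_{\mathit l^2_n(\mathfrak A)}\leq 1$), these coincide verbatim, so the argument goes through cleanly; the surjectivity statement genuinely needs the unit (without it the $\delta_i$ need not lie in $\mathit l_n^2(\mathfrak A)$ and the generation argument fails), which is why the hypothesis is imposed only there.
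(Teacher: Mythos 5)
Your proposal is correct and follows essentially the same route as the paper: the same explicit adjoint $T_x^*y=(\langle x_1,y\rangle,\ldots,\langle x_n,y\rangle)$, the same appeal to Lemma \ref{star} to identify $\left\|T_x\right\|$ with $\mu^*_n(x)$, and the same $\delta_i$ reconstruction for surjectivity in the unital case. The only difference is that you carry out the adjointness computation that the paper leaves as ``easy to verify.''
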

\begin{proof}
(1) Let $x=\left(x_1,\ldots,x_n\right)\in\mathscr{E}^n$. It is easy to verify that $T^*_x:\mathscr{E}\rightarrow\mathit{l}_n^2(\mathfrak{A})$ is defined by $ y\mapsto\left(\left\langle x_1,y\right\rangle,\ldots,\left\langle x_n,y\right\rangle\right)$. Thus $T_x\in\mathcal{L}\left( \mathit{l}_n^2(\mathfrak{A}),\mathscr{E}\right)$. It follows from Lemma \ref{star} that $\left\|T_x\right\|=\mu^*_n(x) $. Hence the map is isometric. 

(2) Let $x\in\mathscr{E}$ and $a=\left(a_1,\ldots,a_n\right)\in\mathit{l}_n^2(\mathfrak{A})$. Then
\begin{align}
\theta_{x,a}\left(b_1,\ldots,b_n\right)=x\sum_{i=1}^{n}a_i^*b_i=\sum_{i=1}^{n}\left( xa_i^*\right)b_i
\end{align}
for each $\left(b_1,\ldots,b_n\right)\in\mathit{l}_n^2(\mathfrak{A})$. Thus,  $\theta_{x,a}=T_{x_0}$, where $x_0=\left(xa_1^*,\ldots,xa_n^*\right)$. Clearly, 	$\mathcal{F}\left( \mathit{l}_n^2(\mathfrak{A}),\mathscr{E}\right)\subseteq\rm{ran}(T)$. Since $T$ is isometric, we conclude that  $\rm{ran}(T)$ is closed. Hence $\mathcal{K}\left( \mathit{l}_n^2(\mathfrak{A}),\mathscr{E}\right)\subseteq\rm{ran}(T)$.

Conversely, let $x=(x_1,\ldots,x_n)\in\mathscr{E}^n$. It follows from \cite[p. 13]{lance} that the map $S_{x_i}: \mathfrak{A}\rightarrow\mathscr{E}$ defined by $S_{x_i}(a)=x_ia$ lies in $\mathcal{K}\left( \mathfrak{A},\mathscr{E}\right)$. We observe that $T_x=[S_{x_1},\ldots,S_{x_n}] $, a $1\times n$ matrix over $\mathcal{K}\left(\mathfrak{A},\mathscr{E}\right)$. It is deduced from \cite[p. 11]{lance} that $T_x\in\mathcal{K}\left( \mathit{l}_n^2(\mathfrak{A}),\mathscr{E}\right)$. Hence,
$\rm{ran}(T)=\mathcal{K}\left( \mathit{l}_n^2(\mathfrak{A}),\mathscr{E}\right)$.

(3) Let $T\in\mathcal{L}\left( \mathit{l}_n^2(\mathfrak{A}),\mathscr{E}\right)$. Pick $\left(a_1,\ldots,a_n\right)\in\mathit{l}_n^2(\mathfrak{A}) $. Then
$T(a_1,\ldots,a_n)=\sum_{i=1}^{n}T\delta_ia_i$, where $\delta_i=\left(0,\ldots,0,1{\mathfrak{A}},0,\ldots,0 \right) $ with the unit being the $i$th entry. Thus $T=T_y$, where $y=\left(T\delta_1,\ldots, T\delta_n\right)$. 
\end{proof}
\begin{remark}
In the case when $\mathfrak{A}$ is nonunital, the map need not be surjective. For example, let $n=1$ and $\mathscr{E}=\mathfrak{A}=\mathbb{K}\left(\mathscr{H}\right)$, where the Hilbert space $\mathscr{H}$ is infinite dimentional. Then $\rm{ran}(T)=\mathcal{K}\left( \mathbb{K}\left(\mathscr{H}\right)\right)=\mathbb{K}\left(\mathscr{H}\right)\neq\mathbb{B}\left(\mathscr{H}\right)=\mathcal{L}\left( \mathbb{K}\left(\mathscr{H}\right)\right)$; see \cite[p. 82]{mor}.
\end{remark}
\section{New version of absolutely $(2,2)$-summing operators}
Let $\mathcal{E}$ and $\mathcal{F}$ be Banach spaces. For $T\in\mathbb{B}(\mathcal{E},\mathcal{F})$, set
\begin{align*}
\pi_{2}(T) : = \sup\left\{\left(\sum_{i=1}^n \left\|Tx_i\right\|^2\right)^{1/2}\ :x_1,\ldots, x_n\in \mathcal{E},\, \mu_{2,n}(x_1,\ldots, x_n)\leq 1, n\in\mathbb{N}\right\},
\end{align*}
where $\mu_{2,n}$ was defined at \eqref{37}. In the case where $\pi_2(T)<\infty$, the operator $T$ is said to be  absolutely $(2,2)$-summing operator; see \cite[p. 68]{5} and \cite[pp. 45--70]{71}. The set of these operators is denoted by $\Pi_2(\mathcal{E},\mathcal{F})$.  The space $\Pi_2(\mathcal{E},\mathcal{E})$ is abbreviated by $\Pi_2(\mathcal{E})$. In fact, $\Pi_2(\mathcal{E},\mathcal{F})=\mathcal{M}(\mathcal{E},\mathcal{F})$ and $\pi_2(T)=\left\|T\right\|_{mb}$ for $T\in\Pi_2(\mathcal{E},\mathcal{F})$. Here, the space $\mathcal{M}(\mathcal{E},\mathcal{F})$ is calculated with respect to the power-normed spaces $\left(\left( \mathcal{E}^n,\mu_{2,n}\right):n\in\mathbb{N}\right)  $ and $\left( \left( \mathcal{F}^n,\left\|\cdot\right\|_{\mathit{l}^2_n}\right) :n\in\mathbb{N}\right)$; see \cite[p. 59]{61}. Hence
$\left( \Pi_2(\mathcal{E},\mathcal{F}),\pi_2(\cdot)\right) $ is a Banach space whenever so is $\mathcal{F}$. 

  For a Hilbert space $\mathscr{H}$ Jaegermann \cite[Proposition 10.1]{71}  stated that 
\begin{equation}\label{HS}
\Pi_2(\mathscr{H})=\mathcal{L}^2(\mathscr{H}) \quad \text{and} \quad \pi_2(u)=\left\|u\right\|_{(2)}  \,\ \left( u\in\mathcal{L}^2(\mathscr{H})\right).
\end{equation}  
 This result motivates us to introduce the following class of adjointable operators from $\mathscr{E}$ to $\mathscr{F}$.
\begin{definition}\label{pp}
Let $\mathscr{E}$ and $\mathscr{F}$ be Hilbert $\mathfrak{A}$-modules. For $T \in\mathcal{L}(\mathscr{E},\mathscr{F})$, we define
\begin{equation}\label{1}
\tilde{\pi}_2(T) : = \sup\left\lbrace \left\|\sum_{i=1}^n \left|Tx_i\right| ^2\right\|^{1/2} :x_1,\ldots,x_n\in\mathscr{E},\, \mu_n(x_1,\ldots,x_n)\leq 1, n\in\mathbb{N}\right\rbrace.
\end{equation}	
\end{definition}
 The set of adjointable operators with $\tilde{\pi}_2(T) < \infty$ is denoted by $\tilde{\Pi}_{2}(\mathscr{E},\mathscr{F})$. In fact, $\tilde{\Pi}_{2}(\mathscr{E},\mathscr{F})=\mathcal{ML}(\mathscr{E},\mathscr{F})$  and $\tilde{\pi}_{2}(T)=\left\|T\right\|_{mb}$ for $T\in\tilde{\Pi}_{2}(\mathscr{E},\mathscr{F})$. Here, the space $\mathcal{ML}(\mathscr{E},\mathscr{F})$ is calculated with respect to the power-normed spaces $\left(\left( \mathscr{E}^n,\mu_n\right):n\in\mathbb{N}\right)  $ and $\left( \left( \mathscr{F}^n,\left\|\cdot\right\|_{\mathit{l}^2_n(\mathscr{F})}\right) :n\in\mathbb{N}\right)$. Hence $\left(\tilde{\Pi}_{2}(\mathscr{E},\mathscr{F}),\tilde{\pi}_{2}(\cdot)\right) $ is a Banach space. 
The proof of the next result is straightforward. However, we prove it for the sake of completeness.
\begin{proposition}\label{p2}
Let $\mathscr{E},\mathscr{F}$, and $\mathscr{G}$ be Hilbert $\mathfrak{A}$-modules.
\begin{enumerate}
\item 
For  $x\in\mathscr{E}$ and $y\in\mathscr{F}$, it holds that $\tilde{\pi}_2(\theta_{y,x})\leq\left\|x\right\|\left\|y\right\|$ and that $\mathcal{F}(\mathscr{E},\mathscr{F})\subseteq\tilde{\Pi}_{2}(\mathscr{E},\mathscr{F})$.
\item 
For  $T\in \mathcal{L}(\mathscr{E},\mathscr{F})$ and $S\in \mathcal{L}(\mathscr{F},\mathscr{G})$,  it holds that $\tilde{\pi}_2(ST)\leq \left\|S\right\|\tilde{\pi}_2(T)$ and that $\tilde{\pi}_2(ST)\leq \tilde{\pi}_2(S)\left\|T\right\|$.
\end{enumerate}
\end{proposition}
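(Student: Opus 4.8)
The plan is to verify each statement by unwinding the definition of $\tilde\pi_2$ in Definition \ref{pp} and using the multiplicativity of $|\cdot|$ under bounded operators together with the behaviour of $\mu_n$ under precomposition already recorded in \eqref{a}.

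For part (1), first I would compute $|\theta_{y,x}z|^2 = \langle z,x\rangle\langle y,y\rangle\langle x,z\rangle$, so that for any $z_1,\dots,z_n \in \mathscr{E}$ one has $\sum_{i=1}^n |\theta_{y,x}z_i|^2 = \sum_{i=1}^n \langle z_i,x\rangle\langle y,y\rangle\langle x,z_i\rangle \le \|y\|^2 \sum_{i=1}^n \langle z_i,x\rangle\langle x,z_i\rangle = \|y\|^2 \sum_{i=1}^n |\langle x,z_i\rangle|^2$. Taking the supremum over $\mu_n(z_1,\dots,z_n)\le 1$ and invoking the description \eqref{b} of $\mu_n$ as a supremum over the unit ball of $\mathscr{E}$ (applied to the single vector $x$, with normalization) gives $\tilde\pi_2(\theta_{y,x}) \le \|x\|\|y\|$. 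For the reverse inequality, test on the single element $z_1 = x/\|x\|$: one checks $\mu_1(x/\|x\|) = \|x/\|x\|\| = 1$ and $\||\theta_{y,x}(x/\|x\|)|^2\|^{1/2} = \|x\|\|y\|$, using $\|\theta_{y,x}\| = \|x\|\|y\|$ recorded in the preliminaries. (The degenerate cases $x=0$ or $y=0$ are trivial since then $\theta_{y,x}=0$.) Then $\mathcal{F}(\mathscr{E},\mathscr{F})\subseteq\tilde\Pi_2(\mathscr{E},\mathscr{F})$ follows because $\tilde\Pi_2$ is a linear subspace (being equal to $\mathcal{ML}(\mathscr{E},\mathscr{F})$, a normed space) and it contains every generator $\theta_{y,x}$.

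For part (2), the inequality $\tilde\pi_2(ST)\le\|S\|\tilde\pi_2(T)$ is the easy one: for any $x_1,\dots,x_n$ with $\mu_n(x_1,\dots,x_n)\le 1$, using $|S w| \le \|S\|\,|w|$ (Cauchy--Schwarz for Hilbert modules, as in \cite[Proposition 1.2]{lance}) with $w = Tx_i$ gives $\sum_i |STx_i|^2 \le \|S\|^2 \sum_i |Tx_i|^2$, and taking norms and the supremum yields the claim. For $\tilde\pi_2(ST)\le\tilde\pi_2(S)\|T\|$, I would instead push the scalar $\|T\|$ through $\mu_n$: given $x_1,\dots,x_n$ with $\mu_n(x_1,\dots,x_n)\le 1$, set $y_i = Tx_i$; by \eqref{a}, $\mu_n(y_1,\dots,y_n) = \mu_n(Tx_1,\dots,Tx_n) \le \|T\|\,\mu_n(x_1,\dots,x_n) \le \|T\|$, so $(y_1/\|T\|,\dots,y_n/\|T\|)$ (assuming $T\ne 0$; otherwise trivial) has $\mu_n \le 1$ and therefore $\|\sum_i |S(y_i/\|T\|)|^2\|^{1/2} \le \tilde\pi_2(S)$, i.e. $\|\sum_i |STx_i|^2\|^{1/2} \le \|T\|\,\tilde\pi_2(S)$; taking the supremum over all admissible families gives the result.

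The only genuinely delicate point is making sure $\mu_n$ is being applied to the right objects: in part (1) one must note that a single-vector family is allowed ($n=1$ is permitted in the supremum), and in part (2) one must use precisely the precomposition estimate \eqref{a}, which concerns $\mu_n$ on the source side and matches the definition of $\tilde\pi_2$ that tests on $\mu_n$-normalized tuples in the domain. I do not anticipate a real obstacle; the argument is a routine transfer of the classical Pietsch-type factorization bookkeeping to the module setting, the substantive work having already been done in establishing \eqref{a} and \eqref{b}.
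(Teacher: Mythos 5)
Your treatment of the upper bound in (1) and of both inequalities in (2) follows the paper's own route essentially verbatim: the estimate $\sum_i|\theta_{y,x}z_i|^2\le\|y\|^2\sum_i|\langle x,z_i\rangle|^2$ combined with \eqref{b}, the pointwise bound $|Sw|^2\le\|S\|^2|w|^2$ from \cite[Proposition 1.2]{lance}, and the precomposition estimate \eqref{a} applied to the rescaled tuple $(Tx_i/\|T\|)$. No issues there.

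There is, however, a genuine error in your lower bound for $\tilde\pi_2(\theta_{y,x})$. You assert that testing on the single vector $z_1=x/\|x\|$ yields the value $\|x\|\,\|y\|$, ``using $\|\theta_{y,x}\|=\|x\|\,\|y\|$.'' But the operator norm is a supremum over the whole unit ball and need not be attained at $x/\|x\|$: one has $\|\theta_{y,x}(x)\|=\|y\langle x,x\rangle\|$, and this can be strictly smaller than $\|y\|\,\|x\|^2$. For a concrete counterexample take $\mathfrak{A}=\mathscr{E}=\mathscr{F}=C[0,1]$ with $\langle a,b\rangle=a^*b$, and let $x,y$ be norm-one functions with disjoint supports; then $\theta_{y,x}(x)=y\,|x|^2=0$ while $\|x\|\,\|y\|=1$. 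The repair is immediate and is exactly what the paper does: since $\mu_1(z)=\|z\|$, the $n=1$ terms of the supremum in Definition \ref{pp} already give $\tilde\pi_2(T)\ge\sup\left\{\|Tz\|:\|z\|\le1\right\}=\|T\|$ for every $T$, hence $\tilde\pi_2(\theta_{y,x})\ge\|\theta_{y,x}\|=\|x\|\,\|y\|$. With this one-line correction your argument coincides with the paper's; the concluding deduction that $\mathcal{F}(\mathscr{E},\mathscr{F})\subseteq\tilde\Pi_2(\mathscr{E},\mathscr{F})$ by linearity is fine.
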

\begin{proof}
(1) Let $x_1,\ldots,x_n\in\mathscr{E}$ with $\mu_n(x_1,\ldots,x_n)\leq1$. It follows from \eqref{b} that
\begin{align*}
\left\|\sum_{i=1}^{n}\left\langle\theta_{y,x}x_i,\theta_{y,x}x_i\right\rangle \right\|=\left\|\sum_{i=1}^{n}\left\langle x_i,x\right\rangle\left\langle y,y\right\rangle\left\langle x,x_i\right\rangle \right\|\leq \left\|y\right\|^2 \left\|x\right\|^2. 
\end{align*} 
Thus $\tilde{\pi}_2(\theta_{y,x})\leq\left\|x\right\|\left\|y\right\|$.

(2) For $x_1,\ldots, x_n\in \mathscr{E}$ with $\mu_n(x_1,\ldots, x_n)\leq 1$, we have
\begin{equation}
\left\|\left(\sum_{i=1}^n\left|STx_i\right|^2\right)\right\|^{1/2}\leq\left\|S\right\|\left\|\left(\sum_{i=1}^n\left|Tx_i\right|^2\right)\right\|^{1/2} 
\end{equation}
since  $\left|STx_i\right|^2\leq\left\|S \right\|^2\left|Tx_i\right|^2$  \cite[Proposition 1.2]{lance}. Thus	$\tilde{\pi}_2(ST)\leq \left\|S\right\|\tilde{\pi}_2(T)$. Moreover,
\begin{equation}
\left\|\left(\sum_{i=1}^n\left|STx_i\right|^2\right)\right\|^{1/2}\leq	 \left\|\left(\sum_{i=1}^n\left|S\left( \frac{T}{\left\|T\right\| }x_i\right) \right|^2\right)\right\|^{1/2}\left\|T\right\|.
\end{equation}
We deduce from \eqref{b}  that $\mu_n(\frac{T}{\left\|T\right\| }x_1,\ldots, \frac{T}{\left\|T\right\| }x_n)\leq 1$. Thus $\tilde{\pi}_2(ST)\leq \tilde{\pi}_2(S)\left\|T\right\|$.
\end{proof}
From Proposition \ref{p2}, we obtain that  $\tilde{\Pi}_2(\mathscr{E})$ is a two-sided ideal of $\mathcal{L}(\mathscr{E})$. 
\begin{example}\label{ex}
Let $\mathfrak{A}$ be a $C^*$-algebra and $\mathfrak{I}$ be a closed right ideal of $\mathfrak{A}$. Then $\tilde{\pi}_{2}\left(T\right)=\left\|T\right\|$ for each $T\in\mathcal{L}\left(\mathfrak{I}\right)$. To see this, suppose that $b_1,\ldots,b_n\in\mathfrak{I}$ with $\mu_n(b_1,\ldots,b_n)\leq1$. Then
\begin{align*}
\left\|\sum_{i=1}^n \left\langle 1_{\mathcal{L}\left(\mathfrak{I}\right) }b_i,1_{\mathcal{L}\left(\mathfrak{I}\right) }b_i\right\rangle\right\|^{1/2}=\left\|(b_1,\ldots,b_n)\right\|_{\mathit{l}_n^2(\mathfrak{I})}=\mu_n(b_1,\ldots,b_n)\leq1\tag{by \eqref{44}}.
\end{align*}
Hence, $\tilde{\pi}_{2}\left(1_{\mathcal{L}\left(\mathfrak{I}\right)}\right)\leq1$. It follows from Proposition\ref{p2}(2)  that 
\begin{align*}
 \left\|T\right\|\leq\tilde{\pi}_{2}\left(T\right)\leq \left\|T\right\|\tilde{\pi}_{2}\left(1_{\mathcal{L}\left(\mathfrak{I}\right)}\right)\leq\left\| T\right\|.	
\end{align*}
Therefore,  $\tilde{\pi}_{2}\left(T\right)=\left\| T\right\|$ and $\tilde{\Pi}_{2}\left(\mathfrak{I}\right)=\mathcal{L}\left(\mathfrak{I}\right)$.	
\end{example}
 A countable subset $\mathbb{S}$ of $\mathscr{E}$ is a set of generators of $\mathscr{E}$ (as a Banach $\mathfrak{A}$-module), if the $\mathfrak{A}$-linear span of $\mathbb{S}$ is norm-dense in $\mathscr{E}$. 
 The following definition is borrowed from \cite[Definition 2.3]{sche}.
\begin{definition}
Let  $\mathscr{E}$ be a  countably generated Hilbert $C^*$-module over  a  $C^*$-algebra $\mathfrak{A}$.  A sequence $\left( f_i :i\in\mathbb{N}\right) $ of elements  of $\mathscr{E}$ is said to be a \emph{frame} if
\begin{equation}\label{e}
\left\langle x,y\right\rangle=\sum_{i=1}^{\infty}\left\langle x,f_i\right\rangle\left\langle f_i,y\right\rangle,
\end{equation}
in the norm topology, for all $x,y\in\mathscr{E}$. Such objects were called \emph{standard normalized frames} in \cite[p. 21]{comm}. 
\end{definition}

 Frank and Larson \cite[p. 21]{comm} introduced Hilbert--Schmidt operators on a countably generated Hilbert $\mathfrak{A}$-module $\mathscr{E}$ over a unital commutative $C^*$-algebra $\mathfrak{A}$.
 Kasparov’s theorem \cite[Theorem 1]{kas} states that such a Hilbert $C^*$-module $\mathscr{E}$ has a frame $\left(  f_i : i\in\mathbb{N}\right)  $. The operator $T\in\mathcal{L}(\mathscr{E})$ is (weakly) Hilbert--Schmidt if the sum $\sum_{i=1}^{\infty}\left\langle Tf_i,Tf_i\right\rangle$ \emph{weakly} converges in $\mathfrak{A}^{\prime\prime}$, where $\mathfrak{A}^{\prime\prime}$ is the bidual of $\mathfrak{A}$ as a von Neumann algebra; see \cite[Theorem 2.4]{Ta}. This definition is justified by the fact that if the sum $\sum_{i=1}^{\infty}\left\langle Tf_i,Tf_i\right\rangle$  converges weakly, then the sum $\sum_{i=1}^{\infty}\left\langle Tg_i,Tg_i\right\rangle$  also converges weakly and gives the same value in $\mathfrak{A}^{\prime\prime}$, where  $\left(  f_i : i\in\mathbb{N}\right)$ and $\left(  g_i : i\in\mathbb{N}\right)$ are two frames. Kaad \cite[Remark 2.7]{kad}  omited the unital condition in the above statements. 

The following lemma immediately follows from \cite[Theorems 4.1.1 and 4.2.4]{mor} and \cite[Theorem 2.4]{Ta}.

\begin{lemma}\label{79}
	Let $(a_\lambda:\lambda\in\Lambda)$ be a net of self-adjoint elements in $\mathfrak{A}$. Then $(a_\lambda:\lambda\in\Lambda)$ is weakly converges in $\mathfrak{A}^{\prime\prime}$ if it is increasing and bounded above.
\end{lemma}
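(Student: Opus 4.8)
The plan is to reduce the assertion to Vigier's monotone convergence theorem for increasing nets of self-adjoint operators on a Hilbert space, applied inside a concrete realization of the von Neumann algebra $\mathfrak{A}^{\prime\prime}$. First I would realize $\mathfrak{A}^{\prime\prime}$ concretely as the weak-operator closure of $\mathfrak{A}$ in $\mathcal{B}(\mathscr{H})$, where $\mathscr{H}$ is the space of the universal representation of $\mathfrak{A}$; this identification of the bidual with an enveloping von Neumann algebra is exactly the content of the results quoted just before the statement, namely \cite[Theorem 4.2.4]{mor} and \cite[Theorem 2.4]{Ta}, and it transports the order structure of $\mathfrak{A}$ faithfully into $\mathcal{B}(\mathscr{H})_{\mathrm{sa}}$.

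Next I would observe that the hypotheses force norm-boundedness of a tail of the net. If $a=a^{*}$ is an upper bound for $(a_\lambda)$, then fixing any $\lambda_0\in\Lambda$ and taking $\lambda\geq\lambda_0$ yields $a_{\lambda_0}\leq a_\lambda\leq a$, hence $-\left\|a_{\lambda_0}\right\|\,1\leq a_\lambda\leq\left\|a\right\|\,1$ (working in the unitization, or in $\mathcal{B}(\mathscr{H})$, when $\mathfrak{A}$ is nonunital), so $\sup_{\lambda\geq\lambda_0}\left\|a_\lambda\right\|<\infty$. Since convergence is a property of the tail, this is all that is needed.

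The analytic heart is then Vigier's theorem \cite[Theorem 4.1.1]{mor}: an increasing net of self-adjoint operators on $\mathscr{H}$ which is bounded above admits a least upper bound $b\in\mathcal{B}(\mathscr{H})_{\mathrm{sa}}$, and the net converges to $b$ in the strong operator topology, hence a fortiori in the weak operator topology. (If one prefers an argument from scratch: for each $\xi\in\mathscr{H}$ the scalar net $\left\langle a_\lambda\xi,\xi\right\rangle$ is increasing and bounded above in $\mathbb{R}$, so it is Cauchy; polarizing and invoking the uniform norm bound yields a bounded sesquilinear form and hence a bounded operator $b$ with $a_\lambda\to b$ weakly.) Because $\mathfrak{A}^{\prime\prime}$ is weak-operator closed and contains every $a_\lambda$, the limit $b$ belongs to $\mathfrak{A}^{\prime\prime}$; and on norm-bounded subsets of $\mathfrak{A}^{\prime\prime}$ the weak operator topology coincides with the $\sigma$-weak topology, i.e.\ the weak${}^{*}$ topology induced by the predual $\mathfrak{A}^{\prime}$. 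Therefore $a_\lambda$ converges weakly in $\mathfrak{A}^{\prime\prime}$ to $b$.

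I do not expect a serious obstacle here: once Vigier's theorem and the enveloping von Neumann algebra picture are in place, the content is essentially formal. The only point requiring care is the topological bookkeeping --- pinning down that ``weakly converges in $\mathfrak{A}^{\prime\prime}$'' refers to the $\sigma$-weak (predual) topology, and that weak-operator convergence of the \emph{bounded} net $(a_\lambda)$ automatically upgrades to $\sigma$-weak convergence. I would state the predual identification cleanly, since the paper applies this lemma to the increasing net of finite partial sums $\sum_{i}\left\langle Tf_i,Tf_i\right\rangle$ attached to a standard normalized tight frame, where it is precisely $\sigma$-weak convergence in $\mathfrak{A}^{\prime\prime}$ that is required.
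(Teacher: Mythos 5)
Your proof is correct and follows exactly the route the paper intends: the paper gives no written argument, merely asserting that the lemma ``immediately follows'' from Vigier's theorem \cite[Theorem 4.1.1]{mor} together with the identification of $\mathfrak{A}^{\prime\prime}$ with the enveloping von Neumann algebra of the universal representation \cite[Theorem 4.2.4]{mor}, \cite[Theorem 2.4]{Ta}. Your write-up simply supplies the details (tail norm-boundedness, SOT $\Rightarrow$ WOT, and the coincidence of the weak operator and $\sigma$-weak topologies on bounded sets) that the authors leave implicit.
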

\begin{theorem}\label{ffrr}
Let $\mathscr{E}$ be a countably generated Hilbert $\mathfrak{A}$-module over a commutative $C^*$-algebra $\mathfrak{A}$ with frame $\left(  f_i : i\in\mathbb{N}\right)$. Then $\sum_{i=1}^{\infty}\left\langle Tf_i,Tf_i\right\rangle$  converges weakly if and only if $T\in \tilde{\Pi}_2(\mathscr{E})$. Moreover, $\tilde{\pi}_2(T)=\left\| \sum_{i=1}^{\infty}\left\langle Tf_i,Tf_i\right\rangle\right\|^{1/2} $ for each  $T\in \tilde{\Pi}_2(\mathscr{E})$.
\end{theorem}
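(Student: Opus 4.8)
The key idea is to compare the frame-sum $\sum_{i\in\mathbb{J}}\langle Tf_i,Tf_i\rangle$ with the defining supremum for $\tilde{\pi}_2(T)$, exploiting that $(f_i:i\in\mathbb{J})$ is a standard normalized tight frame, which means $\sum_{i\in\mathbb{J}}\langle x,f_i\rangle\langle f_i,x\rangle=\langle x,x\rangle$ for every $x\in\mathscr{E}$, with norm convergence. First I would reformulate $\mu_n$ on finite tuples of frame elements: for any finite $F\subseteq\mathbb{J}$ and $(f_i:i\in F)$, the identity $\sum_{i\in F}\langle y,f_i\rangle\langle f_i,y\rangle\leq\langle y,y\rangle$ (a partial sum of the frame identity, using that the terms are positive) shows via \eqref{b} that $\mu_{|F|}\big((f_i)_{i\in F}\big)\leq 1$. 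Hence for any such $F$ and any $T\in\mathcal{L}(\mathscr{E})$, Definition \ref{pp} gives $\big\|\sum_{i\in F}\langle Tf_i,Tf_i\rangle\big\|^{1/2}\leq\tilde{\pi}_2(T)$. Since $\mathfrak{A}$ is commutative and the partial sums $\big(\sum_{i\in F}\langle Tf_i,Tf_i\rangle:F\subseteq\mathbb{J}\text{ finite}\big)$ form an increasing net of positive elements, Lemma \ref{79} shows weak convergence in $\mathfrak{A}''$ is equivalent to this net being bounded above; and by the previous inequality it \emph{is} bounded above (in norm, hence in order) whenever $\tilde{\pi}_2(T)<\infty$. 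This proves one direction of the equivalence together with the inequality $\big\|\sum_{i\in\mathbb{J}}\langle Tf_i,Tf_i\rangle\big\|^{1/2}\leq\tilde{\pi}_2(T)$.

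For the converse, assume $\sum_{i\in\mathbb{J}}\langle Tf_i,Tf_i\rangle$ converges weakly in $\mathfrak{A}''$; I must show $T\in\tilde{\Pi}_2(\mathscr{E})$ and establish the reverse inequality $\tilde{\pi}_2(T)\leq\big\|\sum_{i\in\mathbb{J}}\langle Tf_i,Tf_i\rangle\big\|^{1/2}$. Fix $x_1,\dots,x_n\in\mathscr{E}$ with $\mu_n(x_1,\dots,x_n)\leq 1$; I need to bound $\big\|\sum_{j=1}^n\langle Tx_j,Tx_j\rangle\big\|$. The plan is to expand each $x_j$ through the frame reconstruction $x_j=\sum_{i\in\mathbb{J}}f_i\langle f_i,x_j\rangle$ (valid by the standard normalized tight frame property) and write, using adjointability of $T$ and the $C^*$-algebra structure of $\mathcal{L}(\mathscr{E})$,
\begin{align*}
\sum_{j=1}^n\langle Tx_j,Tx_j\rangle=\sum_{j=1}^n\Big\langle \sum_{i\in\mathbb{J}}Tf_i\langle f_i,x_j\rangle,\sum_{k\in\mathbb{J}}Tf_k\langle f_k,x_j\rangle\Big\rangle,
\end{align*}
which after regrouping (and truncating to a finite $F$) is dominated in order by $\sum_{i\in F}\langle x_j,f_i\rangle\langle f_i,x_j\rangle$-weighted contributions controlled by $\sum_{i\in F}\langle Tf_i,Tf_i\rangle$. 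More precisely, since $\mathfrak{A}$ is commutative one can use that for each fixed $i$ the rank-one piece $\langle Tf_i\langle f_i,x_j\rangle,Tf_i\langle f_i,x_j\rangle\rangle=\langle f_i,x_j\rangle^*\langle Tf_i,Tf_i\rangle\langle f_i,x_j\rangle$, sum over $j$ to get $\big(\sum_j\langle x_j,f_i\rangle\langle f_i,x_j\rangle\big)^{1/2}$-type bounds, and then compare with $\mu_n$. The commutativity is essential here so that positive elements commute with the $\langle f_i,x_j\rangle$ factors and the cross terms $i\neq k$ can be handled by a Cauchy--Schwarz / positivity argument in $l^2_n(\mathscr{E})$; this is the step I expect to require the most care.

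\textbf{Main obstacle.} The delicate point is the interchange of the (infinite) frame sum with the finite sum $\sum_{j=1}^n\langle Tx_j,Tx_j\rangle$ and the justification that the resulting estimate is uniform over all admissible tuples $(x_1,\dots,x_n)$, so that the supremum defining $\tilde{\pi}_2(T)$ is genuinely controlled by $\big\|\sum_{i\in\mathbb{J}}\langle Tf_i,Tf_i\rangle\big\|$. I anticipate handling this by first proving the bound for finite truncations $F$ — where everything is a finite algebraic manipulation and the cross terms are controlled using that $\big(\langle f_i,x_j\rangle\big)_{i\in F}$ has $l^2_n(\mathfrak{A})$-norm at most $1$ by the frame inequality together with \eqref{b} — and then passing to the limit over $F$ using Lemma \ref{79} and weak lower semicontinuity of the norm in $\mathfrak{A}''$. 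Combining the two inequalities yields $\tilde{\pi}_2(T)=\big\|\sum_{i\in\mathbb{J}}\langle Tf_i,Tf_i\rangle\big\|^{1/2}$ for every $T\in\tilde{\Pi}_2(\mathscr{E})$, which also shows this value is frame-independent, consistent with the Frank--Larson construction.
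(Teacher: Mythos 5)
The forward direction of your argument (frame inequality gives $\mu_n(f_1,\dots,f_n)\leq 1$, hence bounded increasing partial sums, hence weak convergence via Lemma \ref{79}, together with $\bigl\|\sum_{i}\langle Tf_i,Tf_i\rangle\bigr\|^{1/2}\leq\tilde{\pi}_2(T)$) is exactly the paper's argument and is fine.

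The converse direction, however, has a genuine gap, and it sits precisely at the point you flag as ``the step I expect to require the most care.'' You propose to expand $x_j=\sum_i f_i\langle f_i,x_j\rangle$ in the \emph{domain} and then apply $T$, which produces the double sum $\sum_{i,k}\langle f_i,x_j\rangle^*\langle Tf_i,Tf_k\rangle\langle f_k,x_j\rangle$ with off-diagonal terms $i\neq k$. You do not actually show how to dominate these cross terms by the diagonal, and the natural estimate does not give what you need: the quantity is $v^*Gv$ with $G=(\langle Tf_i,Tf_k\rangle)_{i,k}$ the Gram matrix of $(Tf_i)$, and the only general bound is by the operator norm of $G$ on $l^2(\mathfrak{A})$, which (since the synthesis operator of a normalized tight frame is a coisometry) recovers only $\|T\|^2\,\|v\|^2$ --- the operator norm of $T$, not the Hilbert--Schmidt quantity $\bigl\|\sum_i\langle Tf_i,Tf_i\rangle\bigr\|$. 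The correct move, used in the paper, is to apply the tight-frame identity to $Tx_j$ in the \emph{range}: $\langle Tx_j,Tx_j\rangle=\sum_i\langle Tx_j,f_i\rangle\langle f_i,Tx_j\rangle=\sum_i\langle x_j,T^*f_i\rangle\langle T^*f_i,x_j\rangle$, which is an exact identity with no cross terms at all. One then swaps the two sums (commutativity of $\mathfrak{A}$), applies the dual characterization \eqref{yi} of $\mu^*_n$ with $x=T^*f_i$ (using $\mu_n=\mu^*_n$ in the commutative case) to get $\sum_j\langle T^*f_i,x_j\rangle\langle x_j,T^*f_i\rangle\leq\langle T^*f_i,T^*f_i\rangle$, and finally invokes the Frank--Larson identity $\sum_i\langle T^*f_i,T^*f_i\rangle=\sum_i\langle Tf_i,Tf_i\rangle$. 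Your sketch mentions neither \eqref{yi} nor this last identity, both of which are essential; without them the reverse inequality $\tilde{\pi}_2(T)\leq\bigl\|\sum_i\langle Tf_i,Tf_i\rangle\bigr\|^{1/2}$ is not established.
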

\begin{proof}
	Let $T\in\tilde{\Pi}_2(\mathscr{E})$. From \eqref{e}, we get
	\begin{align*}
	\left\|\sum_{i=1}^{n}\left\langle x,f_i\right\rangle\left\langle f_i,x\right\rangle \right\|^{1/2}\leq\left\|x \right\|\quad(x\in\mathscr{E}).  
	\end{align*}
	It follows that $\mu_n(f_1,\ldots,f_n)\leq1$. Thus $\left\| \sum_{i=1}^{n}\left\langle Tf_i,Tf_i\right\rangle\right\|^{1/2}\leq \tilde{\pi}_2(T)$. Put $a_n=\sum_{i=1}^{n}\left\langle Tf_i,Tf_i\right\rangle$. The sequence $(a_n:n\in\mathbb{N})$ is increasing and bounded above in $\mathfrak{A}$. Hence $\sum_{i=1}^{\infty}\left\langle Tf_i,Tf_i\right\rangle$ converges weakly in $\mathfrak{A}^{\prime\prime}$ in virtue of Lemma \ref{79}.  
	
	Conversely, let $\sum_{i=1}^{\infty}\left\langle Tf_i,Tf_i\right\rangle$ converge weakly in $\mathfrak{A}^{\prime\prime}$. Pick $n\in\mathbb{N}$ and $x_1,\ldots,x_n\in\mathscr{E}$ with $\mu_n(x_1,\ldots,x_n)\leq1$. Then
	\begin{align*}
	\sum_{j=1}^{n}\left\langle Tx_j,Tx_j\right\rangle&=\sum_{j=1}^{n}\sum_{i=1}^{\infty}\left\langle Tx_j,f_i\right\rangle\left\langle f_i,Tx_j\right\rangle\quad\tag{\text{by} \eqref{e}}\\&=\sum_{i=1}^{\infty}\sum_{j=1}^{n}\left\langle T^*f_i,x_j\right\rangle\left\langle x_j,T^*f_i\right\rangle\quad\tag{\text{by the commutativity of} $\mathfrak{A}$}\\&\leq\sum_{i=1}^{\infty}\left\langle T^*f_i,T^*f_i\right\rangle\tag{\text{by} \eqref{yi} and the fact that $\mu_n=\mu_n^*$}\\&=\sum_{i=1}^{\infty}\left\langle Tf_i,Tf_i\right\rangle\tag{\text{by} \cite[Proposition 4.8]{comm}}.
	\end{align*}
	Thus $\tilde{\pi}_2(T)\leq \left\| \sum_{i=1}^{\infty}\left\langle Tf_i,Tf_i\right\rangle\right\|^{1/2}$. Therefore $T\in\tilde{\Pi}_2(\mathscr{E})$. Hence\\ $\tilde{\pi}_2(T)=\left\| \sum_{i=1}^{\infty}\left\langle Tf_i,Tf_i\right\rangle\right\|^{1/2}$. 
\end{proof}
From the above theorem and the fact that  $\sum_{i=1}^{\infty}\left\langle Tf_i,Tf_i\right\rangle=\sum_{i=1}^{\infty}\left\langle T^*f_i,T^*f_i\right\rangle$ \cite[Proposition 4.8]{comm}, we conclude that $\left(\tilde{\Pi}_2(\mathscr{E}),\tilde{\pi}_2(\cdot)\right)$ is a Banach $*$-algebra.

For $T\in\mathcal{L}(\mathscr{E})$, set $\tilde{\pi}_1(T):=\tilde{\pi}_2(\left|T\right|^{1/2})^2$. Define
\begin{eqnarray}
\tilde{\Pi}_1(\mathscr{E})=\left\lbrace T\in\mathcal{L}(\mathscr{E}):\tilde{\pi}_1(T)<\infty \right\rbrace.
\end{eqnarray}
In fact, from Definition \ref{pp}, we get
\begin{equation}\label{absol}
\tilde{\pi}_1(T)=\sup\left\lbrace\left\|\sum_{i=1}^n \left\langle \left| T\right| x_i,x_i\right\rangle \right\| :x_1,\ldots, x_n\in \mathscr{E},\, \mu_n(x_1,\ldots, x_n)\leq 1, n\in\mathbb{N}\right\rbrace.
\end{equation}
 Suppose that  $\mathfrak{I}$ is a closed right ideal of $\mathfrak{A}$. In virtue of Example \ref{ex}, $\tilde{\pi}_{1}\left(T\right)=\left\|T\right\|$ for each $T\in\mathcal{L}\left(\mathfrak{I}\right)$.  Hence $\tilde{\Pi}_{1}\left(\mathfrak{I}\right)=\mathcal{L}\left(\mathfrak{I}\right)$. In the case where $\mathscr{H}$ is a Hilbert space, we have $\tilde{\Pi}_{1}\left(\mathscr{H}\right)=\mathcal{L}^1(\mathscr{H})$ and $\tilde{\pi}_{1}\left(T\right) =\left\|T\right\|_{(1)}$ for each $T\in\mathcal{L}^1(\mathscr{H})$.
 \begin{example}
Suppose that $\mathscr{E}$ is a countably generated Hilbert $\mathfrak{A}$-module over a commutative $C^*$-algebra $\mathfrak{A}$  with frame $\left(  f_i : i\in\mathbb{N}\right)$. It follows from Theorem \ref{ffrr} that 
 	 \begin{align*}
 	 \tilde{\pi}_{1}\left(T\right)=\tilde{\pi}_2(\left|T\right|^{1/2})=\left\|\sum_{i=1}^{\infty}\left\langle \left| T\right| f_i,f_i\right\rangle\right\|,
 	 \end{align*}
 	 where $T\in\tilde{\Pi}_{1}\left(\mathscr{E}\right)$. It follows from \cite[Definition 3.2 and Theorem 3.28]{sche} that  $\tilde{\pi}_{1}\left(T\right)=\sup_{\tau\in\Omega\left( \mathfrak{A}\right)}\left\|\phi_{\tau}(T)\right\| _{\left(1\right)}$ and $\tilde{\pi}_{1}\left(T\right)=\tilde{\pi}_{1}\left(T^*\right)$ . Moreover,  $\tilde{\Pi}_{1}\left(\mathscr{E}\right)$ is a Banach space.
 \end{example}
   We intend to show that $\left(\tilde{\Pi}_1(\mathscr{E}),\tilde{\pi}_1(\cdot)\right) 
 $ is a Banach space in general. To this end, we need the following lemma.
\begin{lemma}\cite[Theorem 4.2]{triangle}\label{hh}
	Let $\mathfrak{A}$ be a unital $C^*$-algebra. For each $a$ and $b$ in $\mathfrak{A}$ and $\varepsilon>0$, there are unitaries $u$ and $v$ in $\mathfrak{A}$ such that
	\begin{equation}
	\left|a+b\right|\leq u^*\left|a\right| u+v^*\left|b\right| v+\varepsilon 1_{\mathfrak{A}}.  
	\end{equation}
\end{lemma}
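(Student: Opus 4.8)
The statement is the C*-algebraic form of Thompson's triangle inequality, so the plan is to reduce it to the classical matrix case and then bootstrap, with the term $\epsilon 1_{\mathfrak{A}}$ being the price paid at the last step.

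\emph{Matrix case.} First I would recall Thompson's theorem: for $A,B\in M_{k}(\mathbb{C})$ there are unitaries $U,V\in M_{k}(\mathbb{C})$ with the \emph{exact} inequality $\left|A+B\right|\leq U\left|A\right|U^{*}+V\left|B\right|V^{*}$. One route: the polar decomposition $A+B=W\left|A+B\right|$ with $W$ unitary lets one replace $A,B$ by $W^{*}A,W^{*}B$ without changing $\left|A\right|$ or $\left|B\right|$, so one may assume $S:=A+B\geq 0$; then $S=\tfrac12(A+A^{*})+\tfrac12(B+B^{*})$, and combining the elementary estimate $\lambda_{j}(\tfrac12(A+A^{*}))\leq\sigma_{j}(A)$ with the Ky Fan inequality gives the weak majorisation $\lambda(S)\prec_{w}\sigma(A)+\sigma(B)$; the heart of Thompson's argument is then an interpolation producing positive $P,Q$ with spectra $\sigma(A),\sigma(B)$ and $S\leq P+Q$, i.e.\ $P=U\left|A\right|U^{*}$, $Q=V\left|B\right|V^{*}$. (Note that the cheaper bound available from polar decompositions alone, namely $\left|a+b\right|=p\left|a\right|+q\left|b\right|$ with $p,q$ unitary in the invertible case, gives after taking the self-adjoint part only the \emph{averaged} estimate $\left|a+b\right|\leq\tfrac12(\left|a\right|+p\left|a\right|p^{*})+\tfrac12(\left|b\right|+q\left|b\right|q^{*})$, which is too weak; Thompson's sharper result is genuinely needed.)

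\emph{From matrices to $\mathfrak{A}$.} Next I would pass to the bidual $\mathfrak{A}^{**}$, a von Neumann algebra containing $\mathfrak{A}$ isometrically and leaving $\left|a+b\right|$, $\left|a\right|$, $\left|b\right|$ unchanged; there one has a spectral calculus and an abundance of unitaries. An argument inside $\mathfrak{A}^{**}$—reducing via its type decomposition to matrix corners on the discrete part and using comparison of projections on the rest, and taking weak* limits of the conjugating unitaries (keeping them unitary, if necessary after amplifying by $M_{2}$)—produces $U,V\in\mathcal{U}(\mathfrak{A}^{**})$ with $\left|a+b\right|\leq U^{*}\left|a\right|U+V^{*}\left|b\right|V$ in $\mathfrak{A}^{**}$. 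Finally, by the unitary form of the Kaplansky density theorem ($\mathfrak{A}$ is unital, hence $\mathcal{U}(\mathfrak{A})$ is strong*-dense in $\mathcal{U}(\mathfrak{A}^{**})$) one approximates $U,V$ by $u,v\in\mathcal{U}(\mathfrak{A})$; since $\left|a\right|$ and $\left|b\right|$ are fixed, $u^{*}\left|a\right|u+v^{*}\left|b\right|v\in\mathfrak{A}_{+}$ is close to $U^{*}\left|a\right|U+V^{*}\left|b\right|V$, and a uniformity argument over the weak*-compact state space of $\mathfrak{A}$ converts this into the norm bound $\left|a+b\right|\leq u^{*}\left|a\right|u+v^{*}\left|b\right|v+\epsilon 1_{\mathfrak{A}}$.

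I expect the last descent to be the main obstacle. A general C*-algebra has neither polar decomposition with unitary phase nor a spectral theorem for arbitrary elements, so the conjugating unitaries must really be manufactured in $\mathfrak{A}^{**}$ (or on matrix corners) and transported back into $\mathfrak{A}$; moreover strong*-approximation of $U$ by $u\in\mathcal{U}(\mathfrak{A})$ does \emph{not} give norm-approximation of $U^{*}\left|a\right|U$ by $u^{*}\left|a\right|u$, so the norm inequality cannot be read off verbatim and one is forced to absorb the unavoidable slack into $\epsilon 1_{\mathfrak{A}}$. Making this approximation precise is exactly the content of \cite[Theorem 4.2]{triangle}.
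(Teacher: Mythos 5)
The paper offers no proof of this lemma: it is imported verbatim from \cite[Theorem 4.2]{triangle}, so there is no internal argument to compare yours against, and I am judging your sketch on its own terms. On those terms there is a genuine gap, and it sits exactly where you yourself flag it: the descent from $\mathfrak{A}^{**}$ to $\mathfrak{A}$. Suppose you have unitaries $U,V\in\mathcal{U}(\mathfrak{A}^{**})$ with $\left|a+b\right|\leq U^{*}\left|a\right|U+V^{*}\left|b\right|V$ and you approximate them strong$^{*}$ by nets $u_{\lambda},v_{\lambda}\in\mathcal{U}(\mathfrak{A})$ via Kaplansky density. Then $u_{\lambda}^{*}\left|a\right|u_{\lambda}\rightarrow U^{*}\left|a\right|U$ only in the strong$^{*}$ topology, so for each fixed state $\phi$ one eventually has $\phi\left(u_{\lambda}^{*}\left|a\right|u_{\lambda}+v_{\lambda}^{*}\left|b\right|v_{\lambda}+\epsilon 1_{\mathfrak{A}}-\left|a+b\right|\right)\geq 0$, but the stage $\lambda$ at which this happens depends on $\phi$. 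Weak$^{*}$-compactness of the state space buys nothing here: the functions $\phi\mapsto\phi(x_{\lambda})$ converge pointwise, not uniformly, and there is no monotonicity with which to invoke Dini. Hence no single $\lambda$ is guaranteed to make $u_{\lambda}^{*}\left|a\right|u_{\lambda}+v_{\lambda}^{*}\left|b\right|v_{\lambda}+\epsilon 1_{\mathfrak{A}}-\left|a+b\right|$ positive, and the ``uniformity argument'' you appeal to is precisely the missing content rather than a routine verification. As written, your proposal establishes the inequality only with unitaries from $\mathfrak{A}^{**}$, which is not the statement. (The bidual step itself --- ``type decomposition plus comparison of projections'' --- is also only gestured at, not carried out.)

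For orientation, the argument in \cite{triangle} is organized quite differently and avoids this trap. One takes the polar decomposition $a+b=w\left|a+b\right|$ in $\mathfrak{A}^{**}$; since $\left|a+b\right|=w^{*}(a+b)$ is self-adjoint, $\left|a+b\right|=\mathrm{Re}(w^{*}a)+\mathrm{Re}(w^{*}b)$, and $\left|w^{*}a\right|\leq\left|a\right|$, $\left|w^{*}b\right|\leq\left|b\right|$ because $ww^{*}$ is a projection and the square root is operator monotone. Everything then reduces to a one-element lemma of the shape $\mathrm{Re}(x)\leq u^{*}\left|x\right|u+\epsilon 1_{\mathfrak{A}}$, in which the unitary $u$ is manufactured directly inside $\mathfrak{A}$ by continuous-functional-calculus cut-offs of the polar isometry (elements such as $v\,g(\left|x\right|)$ with $g$ vanishing at $0$ already lie in $\mathfrak{A}$); the $\epsilon$ enters at that cut-off, not through a density argument, and neither Thompson's matrix theorem nor Kaplansky density is invoked. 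If you want a self-contained proof, that one-element lemma is the piece to supply; the rest of your outline is scaffolding around the hole rather than a filling of it.
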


\begin{proposition}\label{ph}
	Let $\lambda\in\mathbb{C}$ and let $T,S\in \mathcal{L}(\mathscr{E})$. Then
	\begin{enumerate}
		\item
		$\tilde{\pi}_1(T+S)\leq\tilde{\pi}_1(T)+\tilde{\pi}_1(S)$ and $\tilde{\pi}_1(\lambda T)=\left|\lambda \right|\tilde{\pi}_1(T)$.
		\item
		$\left\|T\right\|\leq\tilde{\pi}_1(T)$.
		\item
		$\tilde{\pi}_1(TS)\leq \left\|T\right\|\tilde{\pi}_1(S)$.
		\item
		$\left(\tilde{\pi}_2(T)\right) ^2\leq\left\|T \right\|\tilde{\pi}_1(T)$.
	\end{enumerate}
\end{proposition}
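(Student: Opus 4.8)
The plan is to reduce every assertion to an order inequality in the unital $C^*$-algebra $\mathcal{L}(\mathscr{E})$, then apply $\langle\,\cdot\,x_i,x_i\rangle$, sum over $i$, pass to norms in $\mathfrak{A}$, and finally take the supremum in \eqref{absol}. The argument uses repeatedly that $0\leq A\leq B$ in $\mathcal{L}(\mathscr{E})$ forces $\langle Ax,x\rangle\leq\langle Bx,x\rangle$ in $\mathfrak{A}$ for every $x\in\mathscr{E}$, that $0\leq a\leq b$ in $\mathfrak{A}$ forces $\left\|a\right\|\leq\left\|b\right\|$, and that \eqref{a} applied to a unitary $U\in\mathcal{U}(\mathscr{E})$ gives $\mu_n(Ux_1,\dots,Ux_n)\leq\mu_n(x_1,\dots,x_n)$.

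Item (2) is quick: the case $n=1$ in \eqref{1} gives $\tilde{\pi}_2(R)\geq\left\|R\right\|$ for every adjointable $R$ (since $\mu_1(x)=\left\|x\right\|$), so $\tilde{\pi}_1(T)=\tilde{\pi}_2(\left|T\right|^{1/2})^2\geq\left\|\,\left|T\right|^{1/2}\,\right\|^2=\left\|\,\left|T\right|\,\right\|=\left\|T\right\|$. The homogeneity in item (1) follows from $\left|\lambda T\right|=(\left|\lambda\right|^2T^*T)^{1/2}=\left|\lambda\right|\left|T\right|$, hence $\left|\lambda T\right|^{1/2}=\left|\lambda\right|^{1/2}\left|T\right|^{1/2}$; since $\tilde{\pi}_2(cR)=\left|c\right|\tilde{\pi}_2(R)$ is immediate from Definition \ref{pp}, we get $\tilde{\pi}_1(\lambda T)=\tilde{\pi}_2(\left|\lambda T\right|^{1/2})^2=\left|\lambda\right|\tilde{\pi}_2(\left|T\right|^{1/2})^2=\left|\lambda\right|\tilde{\pi}_1(T)$, with the case $\lambda=0$ being trivial.

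For items (3) and (4) we produce the relevant operator inequalities in $\mathcal{L}(\mathscr{E})$. Since $T^*T\leq\left\|T\right\|^2 I_{\mathscr{E}}$, conjugation by $S$ gives $\left|TS\right|^2=S^*T^*TS\leq\left\|T\right\|^2\left|S\right|^2$, and operator monotonicity of the square root yields $\left|TS\right|\leq\left\|T\right\|\left|S\right|$. Hence, for $x_1,\dots,x_n$ with $\mu_n(x_1,\dots,x_n)\leq1$, pairing with the $x_i$, summing, and taking norms gives $\left\|\sum_{i=1}^n\langle\left|TS\right|x_i,x_i\rangle\right\|\leq\left\|T\right\|\,\left\|\sum_{i=1}^n\langle\left|S\right|x_i,x_i\rangle\right\|\leq\left\|T\right\|\tilde{\pi}_1(S)$, and taking the supremum over such families (via \eqref{absol}) proves (3). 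Likewise, conjugating $\left|T\right|\leq\left\|T\right\|I_{\mathscr{E}}$ by $\left|T\right|^{1/2}$ gives $\left|T\right|^2\leq\left\|T\right\|\left|T\right|$; since $\langle\left|T\right|^2x_i,x_i\rangle=\left|Tx_i\right|^2$, the same pairing/summing/norm step yields $\left\|\sum_{i=1}^n\left|Tx_i\right|^2\right\|\leq\left\|T\right\|\tilde{\pi}_1(T)$ for such families, and taking the supremum gives $\tilde{\pi}_2(T)^2\leq\left\|T\right\|\tilde{\pi}_1(T)$, which is (4).

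The remaining assertion, subadditivity in (1), is the one requiring Thompson's inequality (Lemma \ref{hh}), and it is here that the unitality of $\mathcal{L}(\mathscr{E})$ is essential. Fix $n$, fix $x_1,\dots,x_n\in\mathscr{E}$ with $\mu_n(x_1,\dots,x_n)\leq1$, and fix $\epsilon>0$. Applying Lemma \ref{hh} in $\mathcal{L}(\mathscr{E})$ to $a=T$ and $b=S$ yields unitaries $U,V\in\mathcal{U}(\mathscr{E})$ with $\left|T+S\right|\leq U^*\left|T\right|U+V^*\left|S\right|V+\epsilon I_{\mathscr{E}}$. Pairing with the $x_i$, summing, and using the triangle inequality for the norm on the resulting sum of positive elements of $\mathfrak{A}$ gives
\begin{equation*}
\left\|\sum_{i=1}^n\langle\left|T+S\right|x_i,x_i\rangle\right\|\leq\left\|\sum_{i=1}^n\langle\left|T\right|Ux_i,Ux_i\rangle\right\|+\left\|\sum_{i=1}^n\langle\left|S\right|Vx_i,Vx_i\rangle\right\|+\epsilon\left\|\sum_{i=1}^n\left|x_i\right|^2\right\|.
\end{equation*}
By \eqref{a} we have $\mu_n(Ux_1,\dots,Ux_n)\leq1$ and $\mu_n(Vx_1,\dots,Vx_n)\leq1$, so \eqref{absol} bounds the first two terms on the right by $\tilde{\pi}_1(T)$ and $\tilde{\pi}_1(S)$. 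The crucial point is that, with $x_1,\dots,x_n$ now fixed, $\left\|\sum_{i=1}^n\left|x_i\right|^2\right\|$ is a finite number independent of $\epsilon$; letting $\epsilon\to0^+$ therefore gives $\left\|\sum_{i=1}^n\langle\left|T+S\right|x_i,x_i\rangle\right\|\leq\tilde{\pi}_1(T)+\tilde{\pi}_1(S)$, and taking the supremum over all admissible families yields $\tilde{\pi}_1(T+S)\leq\tilde{\pi}_1(T)+\tilde{\pi}_1(S)$ (trivially so if the right-hand side is infinite). The main obstacle is exactly this handling of the error term $\epsilon I_{\mathscr{E}}$: it cannot be absorbed uniformly over all test families, since $\tilde{\pi}_1(I_{\mathscr{E}})$ may be infinite, but it can, and must, be absorbed for each fixed family before passing to the supremum.
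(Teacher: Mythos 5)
Your proof is correct and follows essentially the same route as the paper: subadditivity via the Akemann--Anderson--Pedersen inequality (Lemma \ref{hh}) with unitaries conjugating the test family, item (2) from the $n=1$ case of $\tilde{\pi}_2$, and items (3) and (4) from the operator inequalities $\left|TS\right|\leq\left\|T\right\|\left|S\right|$ and $\left|T\right|^2\leq\left\|T\right\|\left|T\right|$ paired against admissible families. The only cosmetic differences are that the paper takes the perturbation $\tfrac{\epsilon}{n}I_{\mathscr{E}}$ so the error term is bounded by $\epsilon$ outright (whereas you keep $\epsilon I_{\mathscr{E}}$ and let $\epsilon\to0^+$ for each fixed family, which is equally valid), and that you spell out the conjugation arguments for (3) and (4) where the paper cites Proposition \ref{p2} and normalizes $S$ instead.
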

\begin{proof}
	(1)  Given $\varepsilon>0$, pick $x_1,\ldots,x_n\in\mathscr{E}$ such that $\mu_n(x_1,\ldots, x_n)\leq 1$. Let  $T,S\in\mathcal{L}(\mathscr{E})$. It follows from Lemma \ref{hh} that there are unitaries $U$ and $V$ in $\mathcal{L}(\mathscr{E})$ such that
	\begin{equation}\label{q}
	\left|T+S\right|\leq U^*\left|T\right| U+V^*\left|S\right| V+\frac{\varepsilon}{n}l_{\mathcal{L}\left(\mathscr{E}\right)}.  
	\end{equation}  
	Using  \eqref{q}, we have
	\begin{align*}
	\left\|\sum_{i=1}^n\left\langle\left|T+S\right|x_i,x_i\right\rangle\right\|\leq\left\|\sum_{i=1}^n\left\langle\left|T\right|Ux_i,Ux_i\right\rangle\right\| +\left\|\sum_{i=1}^n\left\langle\left|S\right|Vx_i,Vx_i\right\rangle\right\|+\varepsilon.
	\end{align*}
	Since $\mu_n(Ux_1,\ldots, Ux_n)\leq 1$ and $\mu_n(Vx_1,\ldots, Vx_n)\leq 1$, we have 	$\tilde{\pi}_1(T+S)\leq\tilde{\pi}_1(T)+\tilde{\pi}_1(S)$. The equality $\tilde{\pi}_1(\lambda T)=\left|\lambda \right|\tilde{\pi}_1(T)$ can be easily proved.
	
	Clause	(2) follows from the corresponding property for the norm $\tilde{\pi}_2(\cdot)$, and clause (3) follows from   Proposition \ref{p2}. 
	
	(4) Suppose that $S\in\mathcal{L}(\mathscr{E})_{[1]}$. Let	$x_1,\ldots,x_n\in\mathscr{E}$  such that $\mu_n(x_1,\ldots, x_n)\leq 1$. Then
	\begin{align*}
	\left\|\sum_{i=1}^n\left\langle Sx_i,Sx_i\right\rangle\right\|\leq\left\|\sum_{i=1}^n\left\langle\left|S\right|\left|S\right|^{1/2}x_i,\left|S\right|^{1/2}x_i\right\rangle\right\|\leq\tilde{\pi}_1(S).
	\end{align*}
	Thus $\left( \tilde{\pi}_2(S)\right)^2\leq\tilde{\pi}_1(S)$. Hence clause (4) follows. 
\end{proof}
Clause (3) shows that $\tilde{\Pi}_1(\mathscr{E})$ is a left ideal of $\mathcal{L}(\mathscr{E})$. Clause (4) shows that $\tilde{\pi}_2(T)\leq\tilde{\pi}_1(T)$.  As a result, $\tilde{\Pi}_1(\mathscr{E})\subseteq\tilde{\Pi}_2(\mathscr{E})$.

\begin{proposition}
	Let $\mathscr{E}$ be a Hilbert $\mathfrak{A}$-module. Then $\left(\tilde{\Pi}_1(\mathscr{E}),\tilde{\pi}_1(\cdot)\right) $ is  a Banach space.
\end{proposition}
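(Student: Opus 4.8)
Since Proposition \ref{ph}(1) already gives subadditivity and absolute homogeneity of $\tilde{\pi}_1$ on $\tilde{\Pi}_1(\mathscr{E})$, and Proposition \ref{ph}(2), i.e. $\|T\|\le\tilde{\pi}_1(T)$, forces definiteness, the pair $\bigl(\tilde{\Pi}_1(\mathscr{E}),\tilde{\pi}_1(\cdot)\bigr)$ is already a normed space; the plan is therefore to prove completeness. Let $(T_k:k\in\mathbb{N})$ be a $\tilde{\pi}_1$-Cauchy sequence. By $\|T_k-T_l\|\le\tilde{\pi}_1(T_k-T_l)$ it is Cauchy for the operator norm, so, since $\mathcal{L}(\mathscr{E})$ is a $C^*$-algebra and hence complete, there is $T\in\mathcal{L}(\mathscr{E})$ with $\|T_k-T\|\to 0$. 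The only substantial step is to show that $\tilde{\pi}_1(T_k-T)\to 0$; once this is done, $T\in\tilde{\Pi}_1(\mathscr{E})$ follows automatically from $\tilde{\pi}_1(T)\le\tilde{\pi}_1(T_k)+\tilde{\pi}_1(T-T_k)<\infty$ for large $k$, and the convergence in $\tilde{\pi}_1$ is exactly what we have shown.

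To carry this out I would mimic the proof of Proposition \ref{ph}(1). Fix $\epsilon>0$ and $N$ with $\tilde{\pi}_1(T_k-T_l)<\epsilon$ for all $k,l\ge N$, and fix $k\ge N$. Let $n\in\mathbb{N}$ and $x_1,\dots,x_n\in\mathscr{E}$ with $\mu_n(x_1,\dots,x_n)\le 1$; since $(\mu_n)$ is a power-norm we have $\|x_i\|\le\mu_n(x_1,\dots,x_n)\le 1$ for each $i$. For any $l\ge N$, Lemma \ref{hh}, applied in the unital $C^*$-algebra $\mathcal{L}(\mathscr{E})$ to $a=T_k-T_l$, $b=T_l-T$ with parameter $\epsilon/n$, produces unitaries $U,V\in\mathcal{L}(\mathscr{E})$ with
\[
|T_k-T|\le U^*|T_k-T_l|U+V^*|T_l-T|V+\tfrac{\epsilon}{n}\,I_{\mathscr{E}}.
\]
Applying the order-preserving assignment $z\mapsto\sum_{i=1}^n\langle zx_i,x_i\rangle$ (for positive $z$ one has $\sum_i\langle zx_i,x_i\rangle\ge 0$ in $\mathfrak{A}$) and then the triangle inequality for $\|\cdot\|$ gives
\[
\Bigl\|\sum_{i=1}^n\langle|T_k-T|x_i,x_i\rangle\Bigr\|\le\Bigl\|\sum_{i=1}^n\langle|T_k-T_l|Ux_i,Ux_i\rangle\Bigr\|+\Bigl\|\sum_{i=1}^n\langle|T_l-T|Vx_i,Vx_i\rangle\Bigr\|+\epsilon,
\]
the final $\epsilon$ coming from $\tfrac{\epsilon}{n}\bigl\|\sum_{i=1}^n|x_i|^2\bigr\|\le\tfrac{\epsilon}{n}\sum_{i=1}^n\|x_i\|^2\le\epsilon$. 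By \eqref{a}, $\mu_n(Ux_1,\dots,Ux_n)\le\|U\|\,\mu_n(x_1,\dots,x_n)\le 1$, so \eqref{absol} bounds the first summand by $\tilde{\pi}_1(T_k-T_l)<\epsilon$; and since $|T_l-T|\le\|T_l-T\|I_{\mathscr{E}}$ and $\|Vx_i\|=\|x_i\|\le 1$, the second summand is at most $n\|T_l-T\|$. Hence $\bigl\|\sum_{i=1}^n\langle|T_k-T|x_i,x_i\rangle\bigr\|\le 2\epsilon+n\|T_l-T\|$ for every $l\ge N$; letting $l\to\infty$ with $k$, $n$ and the $x_i$ held fixed kills the tail term, so $\bigl\|\sum_{i=1}^n\langle|T_k-T|x_i,x_i\rangle\bigr\|\le 2\epsilon$. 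Taking the supremum over all $n$ and all admissible $x_1,\dots,x_n$ yields $\tilde{\pi}_1(T_k-T)\le 2\epsilon$ for every $k\ge N$, which is the desired convergence.

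The crux — and the only point that goes beyond Proposition \ref{ph}(1) — is that the non-linearity of $T\mapsto|T|$ blocks the naive estimate $\tilde{\pi}_1(T_k-T)\le\tilde{\pi}_1(T_k-T_l)+\tilde{\pi}_1(T_l-T)$, because $\tilde{\pi}_1(T_l-T)$ is not controlled a priori. The remedy combines three ingredients: the approximate triangle inequality of Lemma \ref{hh}; absorption of the perturbation $\tfrac{\epsilon}{n}I_{\mathscr{E}}$ into a single $\epsilon$ spread over the $n$ test vectors; and, crucially, deferring the limit $l\to\infty$ until after the $x_i$ and $n$ are fixed, so that the crude bound $n\|T_l-T\|$ on the tail term may safely be sent to $0$. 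No further obstacle is expected; the remainder is the bookkeeping indicated above.
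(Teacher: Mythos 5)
Your proof is correct and follows essentially the same route as the paper: pass to the operator-norm limit $T$, then use the Akemann--Anderson--Pedersen approximate triangle inequality (Lemma \ref{hh}) to split $\left|T_k-T\right|$ into a piece controlled by the $\tilde{\pi}_1$-Cauchy condition and a piece controlled by $n\left\|T_l-T\right\|$ on the fixed test vectors. The only difference is that you argue directly with a $2\epsilon$ bound where the paper argues by contradiction with $\epsilon/3$ bookkeeping, which is immaterial.
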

\begin{proof}
	Let $\left(T_n:n\in\mathbb{N}\right)$ be a Cauchy sequence in $\left(\tilde{\Pi}_1(\mathscr{E}),\tilde{\pi}_1(\cdot)\right) $.  It is a Cauchy sequence with respect to the operator norm; see Proposition \ref{ph}(2). There exists $ T\in\mathcal{L}(\mathscr{E})$ such that $\left\| T_n-T\right\|\rightarrow 0$.	Assume towards a contradiction that $\tilde{\pi}_1(T_n-T)\nrightarrow0$. Then equation \eqref{absol} ensures that there exist $\varepsilon>0$ and $n_0\in\mathbb{N}$ such that
	\begin{equation}\label{ep}
	\tilde{\pi}_1(T_{n_0}-T)> \varepsilon \quad\text{and}\quad \tilde{\pi}_1\left(T_{n_0}-T_{m}\right)<\frac{\varepsilon}{3}\quad\text{for each}\quad m\geq n_0.
	\end{equation}
	Then there  exist  $x_1,\ldots,x_k\in\mathscr{E}$  with $\mu_k(x_1,\ldots, x_k)\leq 1$ such that
	\begin{equation}\label{l22}
	\varepsilon<\left\| \sum_{i=1}^k\left\langle\left|T_{n_0}-T\right|x_i,x_i\right\rangle\right\|. 
	\end{equation}
	Pick $m_0\geq n_0$ such that $\left\|T_{m_0}-T\right\|<\varepsilon/3k$.
	 Lemma \ref{hh}  ensures that there exist $U,V\in\mathcal{U}(\mathscr{E})$ such that
	\begin{equation}\label{tt}
	\left|T_{n_0}-T\right|=U^*\left|T_{n_0}-T_{m_0} \right|U+  V^*\left|T_{m_0}-T \right|V+\frac{\varepsilon}{3k} 1_{\mathcal{L}(\mathscr{E})}.
	\end{equation}
	It follows from \eqref{ep}, \eqref{l22}, and \eqref{tt} that
	\begin{align*}
	\varepsilon<\left\|\sum_{i=1}^k\left\langle\left|T_{n_0}-T\right|x_i,x_i\right\rangle\right\|&\leq\left\|\sum_{i=1}^k\left\langle\left|T_{n_0}-T_{m_0}\right|Ux_i,Ux_i\right\rangle\right\|\\&+\left\|\sum_{i=1}^k\left\langle\left|T_{m_0}-T\right|Vx_i,Vx_i\right\rangle\right\|+\frac{\varepsilon}{3}<\varepsilon,
	\end{align*}
which is a contradiction. Thus $\tilde{\pi}_1(T_n-T)\rightarrow0$. Hence $\left(\tilde{\Pi}_1(\mathscr{E}),\tilde{\pi}_1(\cdot)\right) $ is  a Banach space.
\end{proof}  
 Stern and van Suijlekom \cite[Theorem 3.28]{sche} introduced a Schatten $p$-norm $\left\|\cdot\right\|_{\left[ p\right]}$ for $1\leq p$ such that $\left\|T\right\|_{\left[p\right]}=\sup_{\tau\in\Omega\left( \mathfrak{A}\right)}\left\|\phi_{\tau}(T)\right\| _{\left(p\right)}$ for $T\in\mathcal{L}(\mathscr{E})$, where $\mathscr{E}$ is a Hilbert $C^*$-module over a commutative $C^*$-algebra $\mathfrak{A}$. In this case, it is deduced from \cite[Theorems 3.5 and 3.28]{sche} and Theorem \ref{ffrr} that $\tilde{\pi}_p(T)=\left\|T\right\|_{\left[ p\right]}$ for $p\in\left\lbrace 1,2\right\rbrace$.  In what follows, we extend the definition of $\left\|\cdot\right\|_{\left[ p\right]}$ for an arbitrary Hilbert $C^*$-module $\mathscr{E}$. We define $\left\|T\right\|_{\left[ p\right]}=\sup_{\tau\in\mathcal{PS}\left( \mathfrak{A}\right)}\left\|\phi_{\tau}(T)\right\|_{\left(p\right)}$  for $T\in\mathcal{L}(\mathscr{E})$. It follows from \cite[Theorem 5.1.11]{mor} that $\left\|T\right\|=\sup_{\tau\in\mathcal{PS}\left( \mathfrak{A}\right)}\left\|\phi_{\tau}(T)\right\|$. Define $\mathcal{L}^p(\mathscr{E})=\left\lbrace T\in\mathcal{L}(\mathscr{E}):\left\|T\right\|_{\left[ p\right]}<\infty \right\rbrace $. It is easy to see that $\left\|T\right\|_{\left[ p\right]}$ is a norm on $\mathcal{L}^p(\mathscr{E})$ with expected properties. In the following example, we show that the equality $\tilde{\pi}_p(T)=\left\|T\right\|_{\left[ p\right]}$ is not true, in general.
\begin{example}
Suppose that $\mathscr{E}=\mathfrak{A}=\mathbb{M}_n(\mathbb{C})$. Let $\tau\in\mathcal{PS}\left(\mathfrak{A}\right) $. It follows from \cite[Example 5.1.1]{mor} that there exists a unit vector $f_1$ in the Hibert space $\mathscr{H}=\mathit{l}^2_n$ such that $\tau\left(S\right)=\left\langle Sf_1,f_1\right\rangle$ for each $S\in\mathbb{M}_n(\mathbb{C})$. The Hilbert space $\mathscr{H}$ has an orthonormal basis $\left\lbrace f_1,f_2,\ldots,f_n\right\rbrace $. Let $T\in\mathcal{L}\left(\mathscr{E}\right)=\mathbb{M}_n(\mathbb{C})$. Then $T$ is uniquely determined by the matrix  $[\left\langle Tf_j,f_i \right\rangle]_{i,j=1}^{n}$. Suppose  that $S\in\mathcal{N}_{\tau}$. We observe that  $\left\|Sf_1\right\| =\tau\left(S^*S \right)^{\frac{1}{2}}=0 $. Thus, $\mathcal{N}_{\tau}$ consists of all matrices in $\mathbb{M}_n(\mathbb{C})$ such that their first column is zero. Hence $\mathscr{H}_{\tau}=\mathit{l}^2_n$ and $\left\|\phi_{\tau}(T)\right\|_{\left(p\right)}=\left\|T\right\|_{\left( p\right)}$. Therefore, $\left\|T\right\|_{\left[ p\right]}=\sup_{\tau\in\mathcal{PS}\left( \mathfrak{A}\right)}
\left\|\phi_{\tau}(T)\right\|_{\left(p\right)}=\left\|T\right\|_{\left( p\right)}$. For example, if $p=1$, then $\left\|1_{\mathfrak{A}}\right\|_{\left[1\right]}=n$ while $\tilde{\pi}_1(1_{\mathfrak{A}})=1$.
\end{example}
In general, an operator in $\mathcal{L}(\mathscr{E})$ does not necessarily have  a polar decomposition. Weggo-Olsen  stated a necessary and sufficient condition for operators in $\mathcal{L}(\mathscr{E})$ to admit a polar decomposition; see also \cite{FS}.

\begin{lemma}\cite[Theorem 15.3.7]{wegga}\label{pd}
Suppose  $T\in\mathcal{L}(\mathscr{E},\mathscr{F})$. Then the following conditions are equivalent:
\begin{enumerate}
\item 
The operator $T$ has a unique polar decomposition $T=W\left|T\right|$, where $W\in\mathcal{L}(\mathscr{E},\mathscr{F})$ is a partial isometry for which $\rm{ker}(W)=\rm{ker}(T)=\rm{ker}(\left| T\right|)$ and $\rm{ker}(W^*)=\rm{ker}(T^*)=\rm{ker}(\left| T^*\right| )$.
\item 
$\mathscr{E}=\rm{ker}(T)\oplus\overline{\rm{ran}(T^*)}$ and $	\mathscr{F}=\rm{ker}(T^*)\oplus\overline{\rm{ran}(T)}$.	
\end{enumerate}
In this situation, $W^*W$ is the projection onto  $\overline{\rm{ran}(\left|T\right| )}=\overline{\rm{ran}(T^* )}$ and $WW^*$ is the projection onto  $\overline{\rm{ran}(\left|T^*\right| )}=\overline{\rm{ran}(T )}$.
\end{lemma}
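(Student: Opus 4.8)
The plan is to prove the two implications $(2)\Rightarrow(1)$ and $(1)\Rightarrow(2)$ separately, following the classical Hilbert-space argument but keeping track of the single point where orthogonal complementability is genuinely needed. Throughout I would use three elementary facts valid for any $S\in\mathcal{L}(\mathscr{E},\mathscr{F})$: (i) $\ker(|S|)=\ker(S)$, since $\langle|S|x,|S|x\rangle=\langle|S|^{2}x,x\rangle=\langle S^{*}Sx,x\rangle=\langle Sx,Sx\rangle$; (ii) $\overline{\mathrm{ran}(|S|)}=\overline{\mathrm{ran}(S^{*})}$, obtained from the continuous functional calculus in the $C^{*}$-algebra $\mathcal{L}(\mathscr{E})$ by approximating $t\mapsto t^{1/2}$ and $f_{\varepsilon}(t)=t/(t+\varepsilon)$ uniformly on $[0,\|S\|^{2}]$ by polynomials without constant term, which gives $\mathrm{ran}(|S|)\subseteq\overline{\mathrm{ran}(S^{*}S)}\subseteq\overline{\mathrm{ran}(S^{*})}$ and conversely $S^{*}y=\lim_{\varepsilon\to0}f_{\varepsilon}(S^{*}S)S^{*}y\in\overline{\mathrm{ran}(|S|)}$; and (iii) $\ker(T)\perp\overline{\mathrm{ran}(T^{*})}$, recorded in the preliminaries. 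I would also use that a projection $P\in\mathcal{L}(\mathscr{E})$ yields $\mathscr{E}=\mathrm{ran}(P)\oplus\ker(P)$ with $\mathrm{ran}(P)=(\ker P)^{\perp}$, and that for a partial isometry $W$ both $W^{*}W$ and $WW^{*}$ are projections, $W=WW^{*}W$, and $W$ preserves the $\mathfrak{A}$-valued inner product on $\mathrm{ran}(W^{*}W)$.

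For $(2)\Rightarrow(1)$: on $\mathrm{ran}(|T|)$ set $W_{0}(|T|\xi)=T\xi$; this is well defined by (i) and preserves the inner product since $\langle W_{0}(|T|\xi),W_{0}(|T|\eta)\rangle=\langle T\xi,T\eta\rangle=\langle|T|^{2}\xi,\eta\rangle=\langle|T|\xi,|T|\eta\rangle$, so $W_{0}$ extends to an isometric module map on $\overline{\mathrm{ran}(|T|)}=\overline{\mathrm{ran}(T^{*})}$ with range $\overline{\mathrm{ran}(T)}$. Using $\mathscr{E}=\ker(T)\oplus\overline{\mathrm{ran}(T^{*})}$, define $W$ on $\mathscr{E}$ to be $0$ on $\ker(T)$ and $W_{0}$ on $\overline{\mathrm{ran}(T^{*})}$; using $\mathscr{F}=\ker(T^{*})\oplus\overline{\mathrm{ran}(T)}$, define $V$ on $\mathscr{F}$ to be $0$ on $\ker(T^{*})$ and $W_{0}^{-1}$ on $\overline{\mathrm{ran}(T)}$. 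Decomposing vectors along the two orthogonal summands and using $\overline{\mathrm{ran}(T)}\perp\ker(T^{*})$, $\overline{\mathrm{ran}(T^{*})}\perp\ker(T)$, and that $W_{0}$ is inner-product preserving, one checks $\langle W\xi,y\rangle=\langle\xi,Vy\rangle$, so $W\in\mathcal{L}(\mathscr{E},\mathscr{F})$ with $W^{*}=V$. Then $W^{*}W$ is the projection of $\mathscr{E}$ onto $\overline{\mathrm{ran}(T^{*})}$, so $W$ is a partial isometry; $T=W|T|$ because $|T|\xi\in\mathrm{ran}(|T|)$; and $\ker(W)=\ker(T)=\ker(|T|)$, $\ker(W^{*})=\ker(T^{*})=\ker(|T^{*}|)$ by (i) applied to $T$ and to $T^{*}$. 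Uniqueness: any partial isometry $W'$ with $T=W'|T|$ and $\ker(W')=\ker(T)$ agrees with $W$ on $\mathrm{ran}(|T|)$, hence on $\overline{\mathrm{ran}(|T|)}=\overline{\mathrm{ran}(T^{*})}$ by continuity, and vanishes on $\ker(T)$; these submodules span $\mathscr{E}$, so $W'=W$.

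For $(1)\Rightarrow(2)$: put $P=W^{*}W$ and $Q=WW^{*}$. Since $\ker(P)=\ker(W)=\ker(T)$ we get $\mathscr{E}=\mathrm{ran}(P)\oplus\ker(T)$, and since $\mathrm{ran}(|T|)\subseteq(\ker|T|)^{\perp}=(\ker P)^{\perp}=\mathrm{ran}(P)$ we have $P|T|=|T|$ (whence also $|T|P=|T|$) and therefore $\overline{\mathrm{ran}(T^{*})}=\overline{\mathrm{ran}(|T|)}\subseteq\mathrm{ran}(P)$. For the reverse inclusion one invokes the uniqueness clause: if $\overline{\mathrm{ran}(T^{*})}$ were a proper submodule of $\mathrm{ran}(P)$, composing $W$ on the right with a nontrivial unitary of $\mathrm{ran}(P)$ that is the identity on $\overline{\mathrm{ran}(T^{*})}$ (and on $\ker P$) produces a second partial isometry $W'\neq W$ with $T=W'|T|$ and the same kernels, a contradiction; hence $\mathscr{E}=\ker(T)\oplus\overline{\mathrm{ran}(T^{*})}$. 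Similarly, from $QT=WW^{*}W|T|=W|T|=T$ one gets $\overline{\mathrm{ran}(T)}\subseteq\mathrm{ran}(Q)$, and twisting $W$ on the left by a unitary of $\mathrm{ran}(Q)$ fixing $\overline{\mathrm{ran}(T)}$ gives $\mathrm{ran}(Q)=\overline{\mathrm{ran}(T)}$, so $\mathscr{F}=\ker(T^{*})\oplus\overline{\mathrm{ran}(T)}$. The concluding ``In this situation'' assertions are then restatements of $\mathrm{ran}(W^{*}W)=\overline{\mathrm{ran}(|T|)}=\overline{\mathrm{ran}(T^{*})}$ and $\mathrm{ran}(WW^{*})=\overline{\mathrm{ran}(|T^{*}|)}=\overline{\mathrm{ran}(T)}$, the last using $W|T|W^{*}=|T^{*}|$ (which follows from $TT^{*}=W|T|^{2}W^{*}$ and $|T|P|T|=|T|^{2}$).

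The main obstacle is the reverse inclusion in $(1)\Rightarrow(2)$, namely that $\mathrm{ran}(W^{*}W)$ equals $\overline{\mathrm{ran}(T^{*})}$ and not some strictly larger orthogonally complemented submodule containing it. In a Hilbert space $(\ker T)^{\perp}=\overline{\mathrm{ran}(T^{*})}$ automatically, but in a Hilbert $C^{*}$-module a closed submodule can have trivial orthogonal complement without being everything, so this equality genuinely uses the uniqueness hypothesis; the delicate step is making the ``twist $W$ by a unitary fixing $\overline{\mathrm{ran}(T^{*})}$'' argument rigorous — in particular exhibiting such a nontrivial unitary of $\mathrm{ran}(P)$ whenever the inclusion is proper — and this is exactly the obstruction that makes polar decompositions fail to exist, or to be unique, for general adjointable operators (which is why the hypotheses above are phrased through orthogonal complementation).
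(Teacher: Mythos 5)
First, a remark on the comparison itself: the paper offers no proof of this lemma --- it is quoted directly from Wegge-Olsen \cite[Theorem 15.3.7]{wegga} --- so there is no internal argument to measure yours against. On its own merits, your treatment of $(2)\Rightarrow(1)$ is the standard construction and is correct: $W_{0}(|T|\xi)=T\xi$ is well defined and $\mathfrak{A}$-inner-product preserving on $\mathrm{ran}(|T|)$, extends to $\overline{\mathrm{ran}(|T|)}=\overline{\mathrm{ran}(T^{*})}$, and the two hypothesised orthogonal decompositions let you assemble $W$ together with an explicit adjoint; uniqueness follows since $\ker(T)$ and $\overline{\mathrm{ran}(T^{*})}$ span $\mathscr{E}$.

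The direction $(1)\Rightarrow(2)$, however, contains a genuine gap, located exactly where you flagged it. Your argument requires, whenever $\overline{\mathrm{ran}(T^{*})}\subsetneq\mathrm{ran}(W^{*}W)$, a nontrivial adjointable unitary of $\mathrm{ran}(W^{*}W)$ fixing $\overline{\mathrm{ran}(T^{*})}$ pointwise. Such a unitary need not exist: a proper closed submodule with trivial orthogonal complement can force every operator fixing it to be the identity. Concretely, take $\mathscr{E}=\mathscr{F}=\mathfrak{A}=C[0,1]$ and let $T=|T|$ be left multiplication by $t\mapsto t$. Since $\mathcal{L}(\mathfrak{A})=\mathfrak{A}$, the relation $W|T|=T$ reads $w(x)x=x$ and forces $w\equiv 1$, so $W=I_{\mathfrak{A}}$ is the unique partial isometry giving $T=W|T|$, and every kernel occurring in (1) is $\{0\}$; thus (1) holds verbatim. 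Yet $\overline{\mathrm{ran}(T^{*})}=\{f\in C[0,1]:f(0)=0\}\neq C[0,1]$, so (2) fails. Hence, under the literal reading of (1) --- a partial isometry constrained only by $\ker(W)=\ker(T)$ and $\ker(W^{*})=\ker(T^{*})$ --- the implication $(1)\Rightarrow(2)$ is false, and no argument can close your gap. The statement becomes true (and is Wegge-Olsen's actual theorem) once ``polar decomposition'' is understood to require that $W^{*}W$ be the projection onto $\overline{\mathrm{ran}(|T|)}=\overline{\mathrm{ran}(T^{*})}$ and $WW^{*}$ the projection onto $\overline{\mathrm{ran}(T)}$ --- precisely what the final sentence of the lemma records. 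With that definition, $(1)\Rightarrow(2)$ is immediate from $\mathscr{E}=\mathrm{ran}(W^{*}W)\oplus\ker(W^{*}W)$ and $\mathscr{F}=\mathrm{ran}(WW^{*})\oplus\ker(WW^{*})$, no twisting argument is needed, and your $(2)\Rightarrow(1)$ construction already produces a $W$ with this stronger property, so nothing in that half needs to change.
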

\begin{lemma}\cite[Lemma 3.12]{pol}\label{pol}
Let $	T=W\left|T\right|$ be the polar decomposition of $T\in\mathcal{L}(\mathscr{E},\mathscr{F})$. Then for all $\alpha>0$, the following statements hold:
\begin{enumerate}
\item 
$W\left|T\right|^{\alpha}W^*=\left(W\left|T\right|W^*\right)^{\alpha} =\left|T^*\right|^{\alpha}$;
\item 
$W^*\left|T^*\right|^{\alpha}W=\left(W^*\left|T^*\right|W\right)^{\alpha} =\left|T\right|^{\alpha}$.
\end{enumerate}
\end{lemma}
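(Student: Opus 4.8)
The plan is to reduce the whole statement to the single case $\alpha = 1$ of the second equality in clause (1), namely $W|T|W^{*}=|T^{*}|$, and then transfer continuous functional calculus along conjugation by $W$. From Lemma \ref{pd} we know that $W^{*}W$ is the projection onto $\overline{\mathrm{ran}(|T|)}$ and $WW^{*}$ the projection onto $\overline{\mathrm{ran}(|T^{*}|)}$; in particular $W^{*}W|T|=|T|$, and taking adjoints $|T|W^{*}W=|T|$. I would first note that $W|T|W^{*}=(W|T|^{1/2})(W|T|^{1/2})^{*}\ge 0$, while $T=W|T|$ gives $|T^{*}|^{2}=TT^{*}=W|T|^{2}W^{*}$ and, using $W^{*}W|T|=|T|$, $(W|T|W^{*})^{2}=W|T|(W^{*}W)|T|W^{*}=W|T|^{2}W^{*}=|T^{*}|^{2}$. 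Uniqueness of the positive square root in the $C^{*}$-algebra $\mathcal{L}(\mathscr{F})$ then forces $W|T|W^{*}=|T^{*}|$.

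Next I would prove by induction on $n$, again using $W^{*}W|T|=|T|$, that $W|T|^{n}W^{*}=(W|T|W^{*})^{n}$ for every $n\in\mathbb{N}$, so that $W q(|T|)W^{*}=q(W|T|W^{*})=q(|T^{*}|)$ for every polynomial $q$ with $q(0)=0$. Since $t\mapsto t^{\alpha}$ vanishes at $0$, it is the uniform limit on $[0,\|T\|]$ of polynomials $q_{k}$ with $q_{k}(0)=0$ (Stone--Weierstrass, after subtracting the constant terms, which tend to $0$); because $\sigma(|T|)$ and $\sigma(|T^{*}|)$ lie in $[0,\|T\|]$ and $a\mapsto WaW^{*}$ is norm-continuous, passing to the limit in $W q_{k}(|T|)W^{*}=q_{k}(|T^{*}|)$ yields $W|T|^{\alpha}W^{*}=|T^{*}|^{\alpha}=(W|T|W^{*})^{\alpha}$ for all $\alpha>0$, which is clause (1).

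For clause (2) I would observe that $T^{*}=W^{*}|T^{*}|$ is the polar decomposition of $T^{*}$: indeed $W^{*}$ is a partial isometry, $W^{*}|T^{*}|=W^{*}W|T|W^{*}=|T|W^{*}=T^{*}$ by the identities above, and $\ker(W^{*})=\ker(T^{*})=\ker(|T^{*}|)$ is precisely what Lemma \ref{pd} provides. Applying clause (1) with $T^{*}$ in place of $T$ then produces $W^{*}|T^{*}|^{\alpha}W=(W^{*}|T^{*}|W)^{\alpha}=|T|^{\alpha}$, which is clause (2). The only delicate point in the whole argument is the functional-calculus transfer of the second paragraph: one must insist that the approximating polynomials have zero constant term, which is exactly what keeps the identity $W q(|T|)W^{*}=q(|T^{*}|)$ intact in the limit and prevents a spurious discrepancy between $WW^{*}$ and $I_{\mathscr{F}}$; everything else is routine $C^{*}$-algebra manipulation.
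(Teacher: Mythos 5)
Your argument is correct, and it is worth noting that the paper itself offers no proof of this statement: Lemma \ref{pol} is simply quoted from \cite{pol} (Lemma 3.12 there), so there is nothing internal to compare against. What you have written is a valid self-contained derivation along the standard lines. The two points on which everything hinges are exactly the ones you isolate: first, the identity $W^{*}W|T|=|T|$ (equivalently $|T|W^{*}W=|T|$), which follows from Lemma \ref{pd} because $W^{*}W$ is the projection onto $\overline{\mathrm{ran}(|T|)}$, and which makes $a\mapsto WaW^{*}$ multiplicative on the $C^{*}$-subalgebra generated by $|T|$ so that $(W|T|W^{*})^{2}=W|T|^{2}W^{*}=TT^{*}=|T^{*}|^{2}$ and uniqueness of positive square roots gives $W|T|W^{*}=|T^{*}|$; second, the restriction to approximating polynomials with zero constant term, without which the conjugation identity would pick up the projection $WW^{*}$ in place of $I_{\mathscr{F}}$ and fail. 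Your reduction of clause (2) to clause (1) by checking that $T^{*}=W^{*}|T^{*}|$ is itself the polar decomposition of $T^{*}$ (the kernel conditions in Lemma \ref{pd} being symmetric in $T$ and $T^{*}$) is clean and avoids redoing the functional-calculus step. The only cosmetic remark is that one could phrase the second paragraph more directly as the statement that $a\mapsto WaW^{*}$ restricts to a $*$-isomorphism from $C^{*}(|T|)$ onto $C^{*}(|T^{*}|)$ carrying $|T|$ to $|T^{*}|$, hence intertwining the continuous functional calculi of these two positive elements; this is precisely what your polynomial approximation establishes by hand.
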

The following proposition gives a  condition to achieve more properties for $\tilde{\Pi}_2(\mathscr{E},\mathscr{F})$ and $\tilde{\Pi}_1(\mathscr{E})$.
\begin{proposition}\label{ff}
Let $\mathscr{E}$ and $\mathscr{F}$ be Hilbert $\mathfrak{A}$-modules.
\begin{enumerate}
\item 
For $T\in\mathcal{L}(\mathscr{E},\mathscr{F})$, where $	\mathscr{E}=\rm{ker}(T)\oplus\overline{\rm{ran}(T^*)}$ and $	\mathscr{F}=\rm{ker}(T^*)\oplus\overline{\rm{ran}(T)}$, it holds that
\begin{equation}\label{67}
\tilde{\pi}_2(T)=\tilde{\pi}_2(T^*).
\end{equation} 
\item 
For $T,S\in\mathcal{L}(\mathscr{E})$, where $	\mathscr{E}=\rm{ker}(T)\oplus\overline{\rm{ran}(T^*)}=\rm{ker}(T^*)\oplus\overline{\rm{ran}(T)}$, it holds that
\begin{equation}\label{12}
\tilde{\pi}_1(T)=\tilde{\pi}_1(T^*)\quad\text{and}\quad\tilde{\pi}_1(TS)\leq\left\|S\right\| \tilde{\pi}_1(T). 
\end{equation} 
\end{enumerate}
\end{proposition}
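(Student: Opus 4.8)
The plan is to derive all three assertions from the polar decomposition $T=W\left|T\right|$, which is available by Lemma \ref{pd} precisely because of the hypothesised orthogonal splittings, together with three elementary facts: (i) $\tilde{\pi}_2$ depends on an operator only through its modulus, since $\sum_{i}\left|Ax_i\right|^{2}=\sum_{i}\left\langle A^*Ax_i,x_i\right\rangle$, and hence so does $\tilde{\pi}_1$ by its definition through $\tilde{\pi}_2$; (ii) the contractivity $\mu_n(Cx_1,\ldots,Cx_n)\le\left\|C\right\|\mu_n(x_1,\ldots,x_n)$ of \eqref{a}; and (iii) the two submultiplicative estimates $\tilde{\pi}_2(AB)\le\left\|A\right\|\tilde{\pi}_2(B)$ and $\tilde{\pi}_2(AB)\le\tilde{\pi}_2(A)\left\|B\right\|$ of Proposition \ref{p2}(2). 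I shall also use that $W^*W\left|T\right|=\left|T\right|=\left|T\right|W^*W$, because $I-W^*W$ is the projection onto $\ker(\left|T\right|)=\ker(T)$; in particular $W^*T=\left|T\right|=T^*W$.

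For clause (1): from $T=W\left|T\right|$ and $\left|T\right|=W^*T$, together with $\left\|W\right\|\le1$, fact (iii) gives $\tilde{\pi}_2(T)\le\tilde{\pi}_2(\left|T\right|)\le\tilde{\pi}_2(T)$, so $\tilde{\pi}_2(T)=\tilde{\pi}_2(\left|T\right|)$. Taking adjoints gives $T^*=\left|T\right|W^*$ and $\left|T\right|=T^*W$, and the same kind of sandwich (now using the other estimate in (iii)) gives $\tilde{\pi}_2(T^*)=\tilde{\pi}_2(\left|T\right|)$; hence \eqref{67}. For the first equality in clause (2): Lemma \ref{pol} with $\alpha=\tfrac12$ gives $\left|T^*\right|^{1/2}=W\left|T\right|^{1/2}W^*$ and $\left|T\right|^{1/2}=W^*\left|T^*\right|^{1/2}W$; sandwiching these with fact (iii) and $\left\|W\right\|,\left\|W^*\right\|\le1$ yields $\tilde{\pi}_2(\left|T^*\right|^{1/2})\le\tilde{\pi}_2(\left|T\right|^{1/2})\le\tilde{\pi}_2(\left|T^*\right|^{1/2})$, and squaring (recall $\tilde{\pi}_1(R)=\tilde{\pi}_2(\left|R\right|^{1/2})^{2}$) gives $\tilde{\pi}_1(T)=\tilde{\pi}_1(T^*)$.

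For the inequality $\tilde{\pi}_1(TS)\le\left\|S\right\|\tilde{\pi}_1(T)$ I would first reduce to a positive operator. From $\left|TS\right|^{2}=(TS)^*(TS)=S^*\left|T\right|^{2}S=(\left|T\right|S)^*(\left|T\right|S)$, the operator $\left|TS\right|$ coincides with the modulus of $\left|T\right|S$, so by fact (i) $\tilde{\pi}_1(TS)=\tilde{\pi}_1(\left|T\right|S)$; moreover $P:=\left|T\right|$ still satisfies the hypothesis of clause (1), since $\ker(\left|T\right|)=\ker(T)$ and $\overline{\operatorname{ran}(\left|T\right|)}=\overline{\operatorname{ran}(T^*)}$. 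Taking the polar decomposition $PS=V\left|PS\right|$ (so $\left|PS\right|=V^*PS$), for any $x_1,\ldots,x_n$ with $\mu_n(x_1,\ldots,x_n)\le1$ the Cauchy--Schwarz inequality in $\mathit{l}^2_n(\mathscr{E})$ gives
\begin{align*}
\Bigl\|\sum_{i=1}^{n}\bigl\langle\left|PS\right|x_i,x_i\bigr\rangle\Bigr\|=\Bigl\|\sum_{i=1}^{n}\bigl\langle P^{1/2}Sx_i,\,P^{1/2}Vx_i\bigr\rangle\Bigr\|\le\Bigl\|\sum_{i=1}^{n}\bigl|P^{1/2}Sx_i\bigr|^{2}\Bigr\|^{1/2}\,\Bigl\|\sum_{i=1}^{n}\bigl|P^{1/2}Vx_i\bigr|^{2}\Bigr\|^{1/2}.
\end{align*}
Since $\mu_n(Sx_1,\ldots,Sx_n)\le\left\|S\right\|$ and $\mu_n(Vx_1,\ldots,Vx_n)\le1$ by \eqref{a}, and $\tilde{\pi}_2(P^{1/2})^{2}=\tilde{\pi}_1(P)=\tilde{\pi}_1(T)$, the right-hand side is at most $\left\|S\right\|\tilde{\pi}_1(T)$; taking the supremum over all such tuples and over $n\in\mathbb{N}$ would complete the argument. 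The step I expect to be the main obstacle is the invocation of the polar decomposition of $PS=\left|T\right|S$: by Lemma \ref{pd} this needs $\mathscr{E}=\ker(PS)\oplus\overline{\operatorname{ran}((PS)^*)}$ and $\mathscr{E}=\ker((PS)^*)\oplus\overline{\operatorname{ran}(PS)}$, which must be extracted from the hypothesis on $T$ alone (for instance by writing $\ker(PS)=\ker(ES)$ with $E$ the range projection of $P$ and identifying $\overline{\operatorname{ran}((PS)^*)}$ correspondingly) or otherwise circumvented; the alternative passage to the adjoint, $\tilde{\pi}_1(TS)=\tilde{\pi}_1((TS)^*)=\tilde{\pi}_1(S^*T^*)\le\left\|S^*\right\|\tilde{\pi}_1(T^*)=\left\|S\right\|\tilde{\pi}_1(T)$ via Proposition \ref{ph}(3) and the first equality of clause (2), only replaces this by the analogous question for $TS$ in place of $\left|T\right|S$.
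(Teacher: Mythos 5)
Your treatment of clause (1) and of the equality $\tilde{\pi}_1(T)=\tilde{\pi}_1(T^*)$ is correct and is essentially the paper's argument: both rest on the polar decomposition $T=W\left|T\right|$ supplied by Lemma \ref{pd}. The paper conjugates $\left|T\right|^{\alpha}$ by $W$ via Lemma \ref{pol} (with $\alpha=2$ for \eqref{67} and $\alpha=1$ for $\tilde{\pi}_1$) and substitutes $x_i\mapsto W^*y_i$ directly into the defining suprema \eqref{1} and \eqref{absol}, whereas you route the same information through the submultiplicativity estimates of Proposition \ref{p2}(2) (and use $\alpha=\tfrac12$); the two are interchangeable.

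The place where you hesitate is exactly the place where the proposition's proof is thin. For the inequality $\tilde{\pi}_1(TS)\leq\left\|S\right\|\tilde{\pi}_1(T)$ the paper takes precisely the ``alternative passage to the adjoint'' that you mention at the end: it writes $\tilde{\pi}_1(TS)=\tilde{\pi}_1(S^*T^*)$ and then applies Proposition \ref{ph}(3) together with the first equality of clause (2). But that first step is the adjoint-invariance of $\tilde{\pi}_1$ applied to the operator $TS$, not to $T$, and the hypothesis $\mathscr{E}=\ker(T)\oplus\overline{\mathrm{ran}(T^*)}=\ker(T^*)\oplus\overline{\mathrm{ran}(T)}$ does not by itself yield the corresponding splittings for $TS$, hence does not give $TS$ a polar decomposition. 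Your direct Cauchy--Schwarz argument is sound once a polar decomposition of $\left|T\right|S$ is available, but that is the same unproved complementability statement in different clothing. So neither route closes this last gap from the stated hypothesis alone; your diagnosis of the obstacle is accurate, and the cleanest repairs are either to strengthen the hypothesis so that $TS$ (equivalently $\left|T\right|S$) admits a polar decomposition --- for instance the blanket complementedness of kernels invoked in the remark following the proposition, or closed-range assumptions --- or to find an argument for $\tilde{\pi}_1(TS)\leq\left\|S\right\|\tilde{\pi}_1(T)$ that avoids $\left|TS\right|$ altogether.
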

\begin{proof}
(1) From Lemma \ref{pol}, the operator $T$ has a polar decomposition $T=W\left|T\right|$. In Lemma \ref{pd}, put $\alpha=2$. We observe that $W\left|T\right|^2W^*=\left|T^*\right|^2$ and $W^*\left|T^*\right|^2W=\left|T\right|^2$. It follows from \eqref{1} that $\tilde{\pi}_2(T)=\tilde{\pi}_2(T^*)$.

(2) From Lemma \ref{pol}, the operator $T$ has a polar decomposition $T=W\left|T\right|$. In Lemma \ref{pd}, put $\alpha=1$. We see that $W\left|T\right|W^*=\left|T^*\right|$ and $W^*\left|T^*\right|W=\left|T\right|$. It follows from \eqref{absol} that $\tilde{\pi}_1(T)=\tilde{\pi}_1(T^*)$.  We conclude from Proposition \ref{ph}(3) that $\tilde{\pi}_1(TS)=\tilde{\pi}_1(S^*T^*)\leq\left\|S\right\|\tilde{\pi}_1(T)$.
\end{proof}

\begin{remark}
	Inspired by \cite[Theorem 3.2]{lance} and \eqref{67}, if $T$ is a closed range operator in $\mathcal{L}(\mathscr{E},\mathscr{F})$, we have $\tilde{\pi}_2(T)=\tilde{\pi}_2(T^*)$. Furthermore,  for a closed range operator in $\mathcal{L}(\mathscr{E})$,   equation \eqref{12} is established.
\end{remark}
\begin{remark}
	In virtue of Proposition \ref{ff},	if  $\rm{ker}(T)$ is complemented for each $T\in\mathcal{L}(\mathscr{E},\mathscr{F})$, then $\tilde{\Pi}_2(\mathscr{E},\mathscr{F})$ is a Banach $*$-algebra. Furthermore, for $\mathcal{L}(\mathscr{E})$, we conclude that $\tilde{\Pi}_1(\mathscr{E})$ is a Banach $*$-algebra and a two-sided ideal of $\mathcal{L}(\mathscr{E})$. The complementedness of $\rm{ker}(T)$ is possible in the case where  $\mathscr{E}$ is a Hilbert $C^*$-module over a $C^*$-algebra of compact operators; see \cite[p. 2]{moslehian} and \cite{1, 7}.  
\end{remark}

\
\medskip

\noindent \textit{Acknowledgment.} The authors would like to thank the referee for carefully reading the paper.\\

\noindent \textit{Conflict of Interest Statement.} On behalf of all authors, the corresponding author states that there is no conflict of interest.\\

\noindent\textit{Data Availability Statement.} Data sharing not applicable to this article as no datasets were generated or analysed during the current study.
\medskip
\bibliographystyle{amsplain}

\end{document}